\documentclass[reqno,a4paper,12pt]{amsart}

    \usepackage{amsmath,amscd,amsfonts,amssymb}
    \usepackage{mathrsfs,dsfont}
    \usepackage{color}
    \usepackage{mathtools}
    \usepackage{tikz-cd}
\usepackage{hyperref}
    \usepackage{subcaption}
\usepackage{setspace}
\usepackage{mathrsfs,dsfont}
\numberwithin{equation}{section}

   \usepackage[twoside=false]{geometry}

    \usepackage{tikz}
   \usetikzlibrary{decorations.pathreplacing}

    \numberwithin{equation}{section}
\numberwithin{figure}{section}

\addtolength{\topmargin}{-1cm}
\addtolength{\textheight}{2cm}
\addtolength{\hoffset}{-1cm}
\addtolength{\textwidth}{1.5cm}

\parskip .06in

    \def\cW{\mathcal{W}}
    \def\cS{\mathcal{S}}

    \def\R{\mathbb{R}}
    
    \def\Q{\mathbb{Q}}
    \def\Z{\mathbb{Z}}
    \def\N{\mathbb{N}}

    \def\one{\mathds{1}}
    
    \newcommand{\Mod}[1]{\ (\mathrm{mod}\ #1)}

    \renewcommand\leq{\leqslant}
    \renewcommand\geq{\geqslant}

    \renewcommand{\ae}{{\mathrm{a.e.}}}
    \newcommand{\as}{{\mathrm{a.s.}}}
    \newcommand\X{\mathcal{X}}
    
    \newcommand{\ft}[1]{\widehat #1}

    \newcommand{\supp}{\operatorname{supp}}
    
    \newcommand{\diam}{\operatorname{diam}}

    \newcommand{\Tile}{\operatorname{Tile}}
    \newcommand{\Solution}{\operatorname{Coloring}}

    \newcommand{\zft}[1]{\mathcal{Z}(\ft{\one}_{#1})}

    \theoremstyle{plain}
    \newtheorem{thm}{Theorem}[section]
    \newtheorem{theorem}[thm]{Theorem}
    
    \newtheorem{lemma}[thm]{Lemma}

    \newtheorem{question}[thm]{Question}
    
    \newtheorem{conjecture}[thm]{Conjecture}
    
    \newtheorem*{claim*}{Claim}

    \theoremstyle{definition}
    \newtheorem{definition}[thm]{Definition}
    \newtheorem*{definition*}{Definition}
    \newtheorem*{remarks*}{Remarks}
    \newtheorem*{remark*}{Remark}
    \newtheorem{remark}[thm]{Remark}
    \newtheorem{example}[thm]{Example}

\begin{document}

	\title{Translational tilings: structured or wild?}

	\author{Rachel Greenfeld}
	\address{Department of Mathematics, Northwestern University, Evanston, IL 60208.}
	\email{rgreenfeld@northwestern.edu}

	\subjclass[52C22, 52C23, 03B25, 05B45, 37B52, 39A23]{52C22, 52C23, 03B25, 05B45, 37B52, 39A23}
	\date{}
	
	\keywords{Translational tiling. Periodic tiling conjecture. Decidability. Domino problem. Aperiodic tiling.}

\begin{abstract} 
The study of the structure of translational tilings has captivated mathematicians, scientists, and the general public for centuries and continues to thrive at the crossroads of analysis, combinatorics, dynamics, logic, number theory, and geometry. This vibrant field seeks to uncover the delicate divide between rigid structures and unpredictable, ``wild'' behaviors that arise when sets fill space by translations without gaps or overlaps. We provide an overview of this study and recent developments, highlighting its multidisciplinary nature and offering a glimpse into the process behind the results.
\end{abstract}

 \maketitle

\section{Introduction}

\emph{Tiling} a space means covering it without overlaps using a small number of different types of building blocks (often just one), which we refer to as \emph{tiles}. 
A simple example is a kitchen floor tiled with identical squares, neatly arranged next to each other. But tilings need not be so plain--they can be playful, like Escher’s lizard tiling, or even more surprising, with no patterns repeating, as in the aperiodic Penrose kite and dart or the recently discovered aperiodic hat tiling.

\begin{figure}[ht]%
    \centering
    \subfloat[\centering Square tiling.]{{\includegraphics[width=3.5cm]{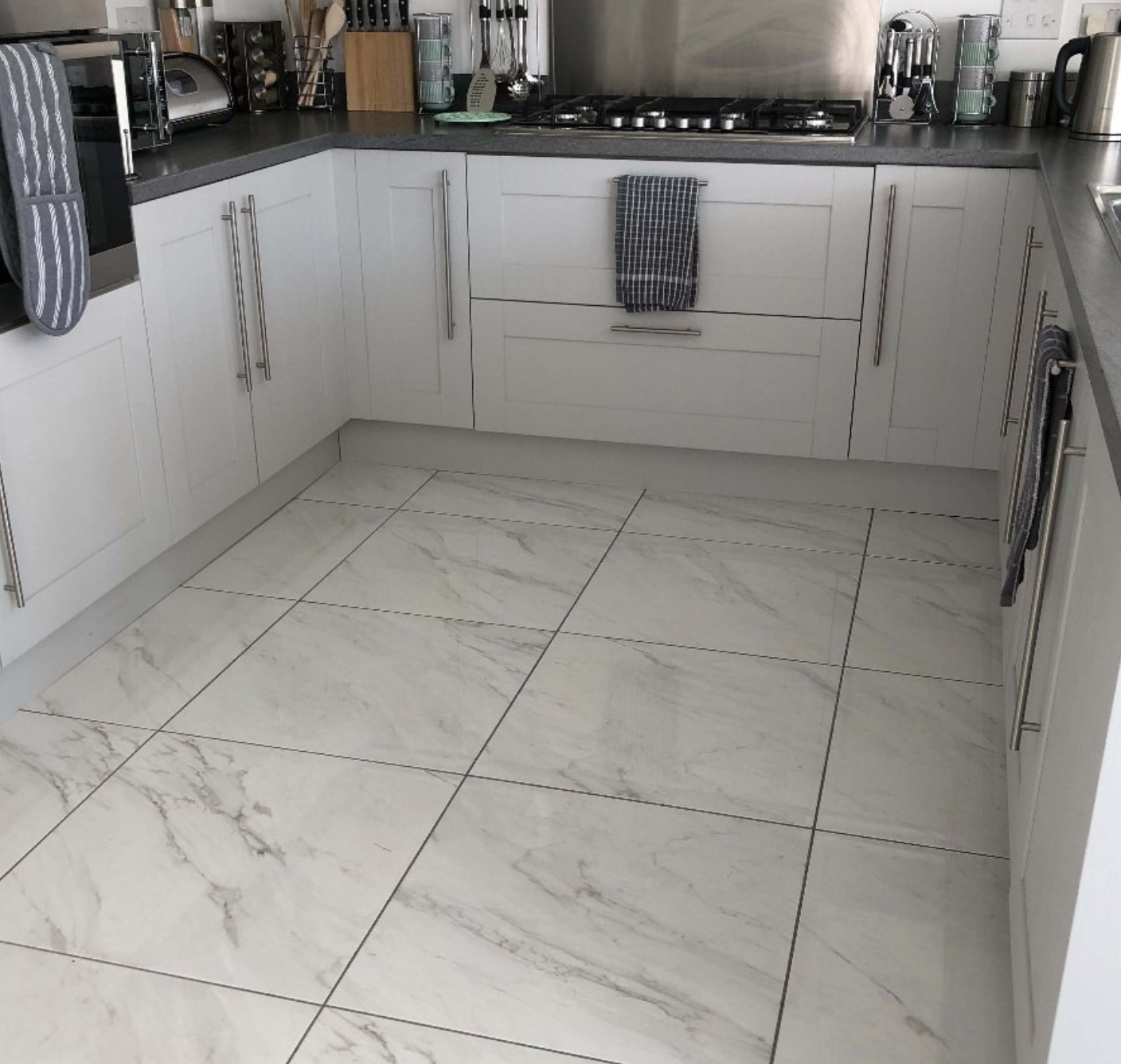} }}%
    \quad
    \subfloat[\centering Escher's lizard tiling.]{{\includegraphics[width=3.5cm]{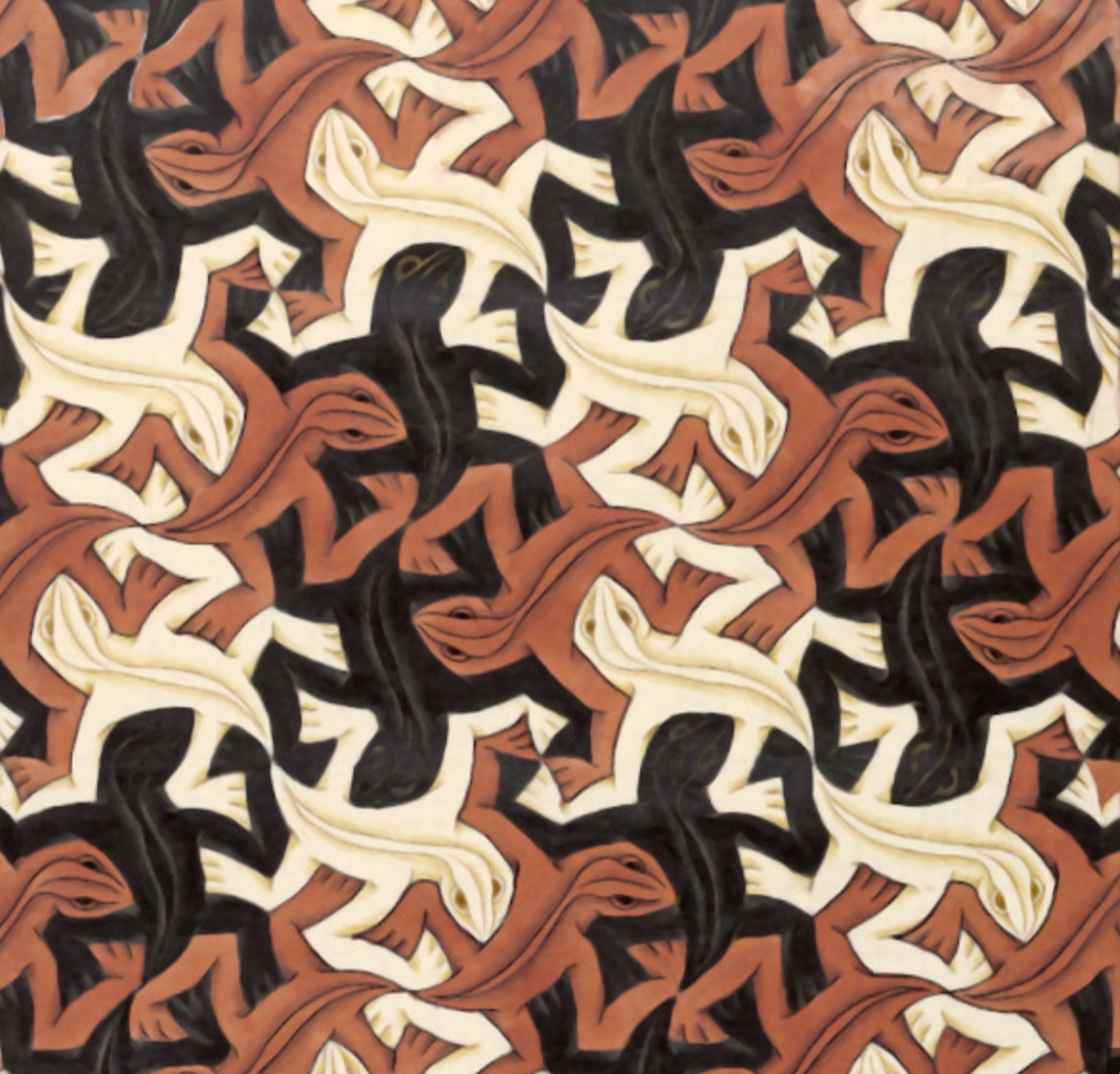} }}%
    \quad
    \subfloat[\centering Penrose kite-dart tiling.]{{\includegraphics[width=3.5cm]{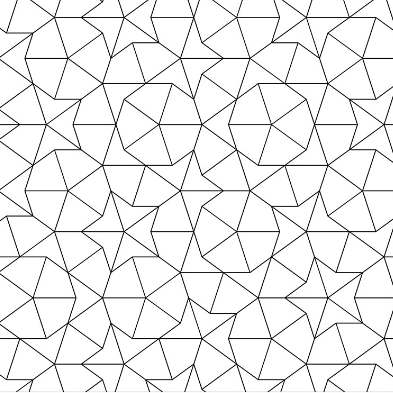} }}%
    \quad
    \subfloat[\centering Hat tiling \cite{hat}.]{{\includegraphics[width=3.5cm]{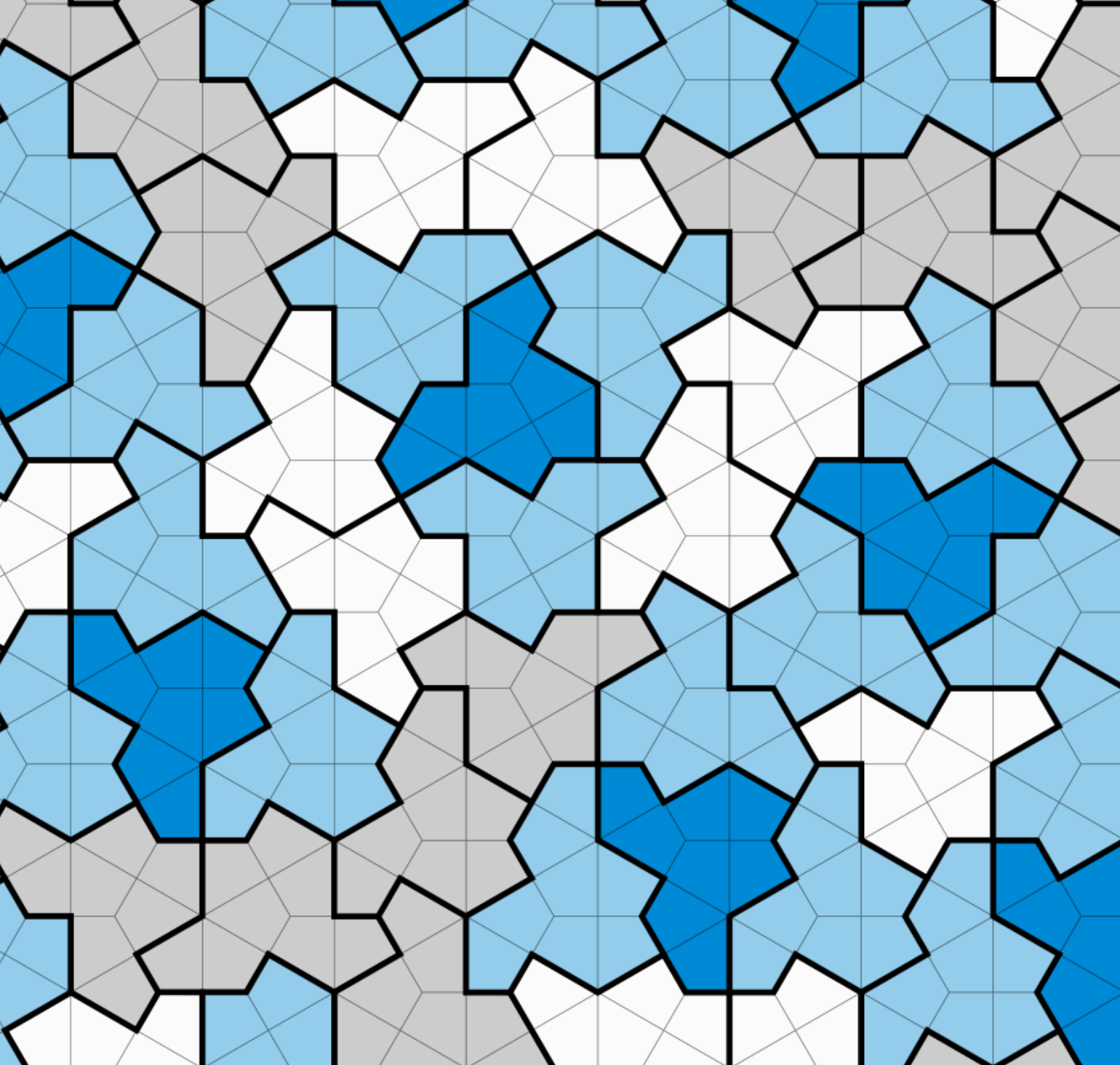} }}%
\end{figure}

Beyond aesthetics and tile decoration, these examples reveal a crucial distinction in their underlying geometry: unlike the square tiling, which consists solely of translated copies of the tile, creating an Escher lizard tiling requires both translations and rotations of the lizard tile, and the hat tiling requires not only translations and rotations, but also reflections of the tile to perfectly cover the plane (cf. the \emph{spectres} in \cite{SmithMyersKaplanGoodmanStrauss2024chiral}, which tile the plane aperiodically by translations and rotations only). Moreover, note that Penrose's tiling uses translations and rotations of \emph{two} tiles--kite and dart, rather than a single tile.
In this article, we exclusively explore the first, most basic case: tiling achieved through translations alone.

The study of tiling by translations has connections to parts of analysis, combinatorics, dynamics, algebra and number theory, as well as computability theory and, of course, geometry. It goes back to antiquity and has remained vibrant in the present through Hilbert's eighteenth problem \cite{hilbert1902problems}, Keller's conjecture \cite{Keller}, Wang's domino problem \cite{Wang1961,wang}, the periodic tiling conjecture \cite{GS,S74,LW}, Fuglede's spectral-set conjecture \cite{fuglede1974commuting}, the Coven--Meyerowitz conjecture \cite{coven1999tiling}, and other significant problems and conjectures.  
The restriction of a tile's allowed isometries to translations alone enforces a more rigid and structured behavior on the resulting tilings. Translational tilings (in Abelian groups) often have properties that more general tilings do not have.
However, it has turned out that these properties cease to hold when the dimension is sufficiently large. It appears that the extra freedom afforded by high dimension compensates for the rigidity imposed by restricting to translations.

 More generally, there seems to be a mysterious divide between structured tiling problems, in which the tilings are well behaved, and wild tiling problems, where almost anything can happen; e.g., 
 translational tilings  vs. isometric tilings \cite{gbn,hat}, monotilings  vs. tilings by multiple tiles \cite{GT,Ber,GT2}, low dimensional tilings  vs. high dimensional tilings \cite{N,B,GT22,GT23}, level one tilings  vs. higher level tilings \cite[Theorem 1.3]{GT}, etc.
We provide an overview of the study of this mystery, including recent developments (to date) in this area, carried out by the authors and collaborators, focusing mostly on a project joint with Terence Tao. 

We reveal some of the milestones in our study process--omitted from our published works yet crucial in the journey that led to our recent developments: how our thinking evolved, how failed attempts redirected us, and how these ultimately paved the way to our solutions.
Our main goal is to provide a window into our process, enabling other researchers to examine it and, we hope, build upon it to advance the field further.
Another objective is to convey the multidisciplinary aspect of our study process: how shifting perspectives, adopting different viewpoints, and combining tools from analysis, commutative algebra, combinatorics, dynamics, and logic eventually opened the gate to solving the periodic tiling conjecture.

\subsection{Notation and preliminary definitions.}
We use $|S|$ to denote the counting measure (the size) of a set $S$ in a finitely generated Abelian group; for a real number $r$, we use  $|r|$ to denote its magnitude. We use $\lfloor \cdot\rfloor\colon \R\to \Z$ to denote the integer part of a real number (i.e., the greatest integer less than or equal to that number), and $\{\cdot\}\colon \R\to [0,1)$ for the fractional part of a real number, i.e., $\{r\}\coloneqq r-\lfloor r\rfloor$. 

For sets $A,B$ we write $A\sqcup B$ to denote their union if these sets are disjoint and leave $A\sqcup B$ undefined if these sets are not disjoint. When $A,B$ are subsets of a group, we use the Minkowski sum notation:
$$A+B =\{a+b\colon a\in A,\; b\in B\};$$ clearly $A+B=B+A$ when the group is Abelian. We use the direct-sum notation $A\oplus B$ to denote the set $A+B$ if (almost) all the sums $a+b$, $a\in A, b\in B$ are distinct, and leave $A \oplus B$ undefined otherwise.
We also use the notation $\one_A$ for the indicator function of the set $A$, i.e., $$\one_A(x)=
\begin{cases}
    1 & \text{if } x\in A\\
    0 & \text{if } x\not\in A
\end{cases}.$$ 

We say that two nonzero vectors $v_1,v_2 \in \Z^d$, $d\geq 2$ are \emph{incommensurable} if they are linearly independent over $\Q$. We say that $n$ nonzero vectors $v_1,\dots,v_n \in \Z^d$ are \emph{mutually incommensurable} if $v_j,v_{j'}$ are incommensurable for all $1\leq j < j' \leq n$.

We say that a function $g\colon \Z\to \R/\Z$  \emph{equidistibutes} if 
$$\lim_{N\to \infty} \frac{1}{2N+1}\sum_{n=-N}^{N} \phi(g(n))=\int_{\R/\Z} \phi(t)dt$$
for every continuous function $\phi\colon \R/\Z \to \R$.

Let $\Sigma$ be a finite set. A set $\X\subset \Sigma^\Z$ is called a \emph{subshift} if it is compact in the product topology on $\Sigma^\Z$ and translation invariant (that is, $(x_n)_{n\in \Z}\in \X$ if and only if $(x_{n+1})_{n\in \Z}\in \X$).

\section{The domino problem: encoding wild behavior}\label{sec:domino}

In the early 1960s, H. Wang introduced a fundamental challenge known as \emph{Wang's domino problem} \cite{Wang1961,wang}. This problem involves a finite set of square tiles, each with colored sides, also known as \emph{Wang squares}. The goal is to determine whether an infinite plane can be completely covered by these tiles where the colors of all adjacent sides match.  A successful arrangement of the entire plane is called \emph{Wang tiling}; see Figure \ref{fig:wang} for an example.

\begin{figure}[ht]
    \centering
    \rotatebox{90}{\includegraphics[width=5cm]{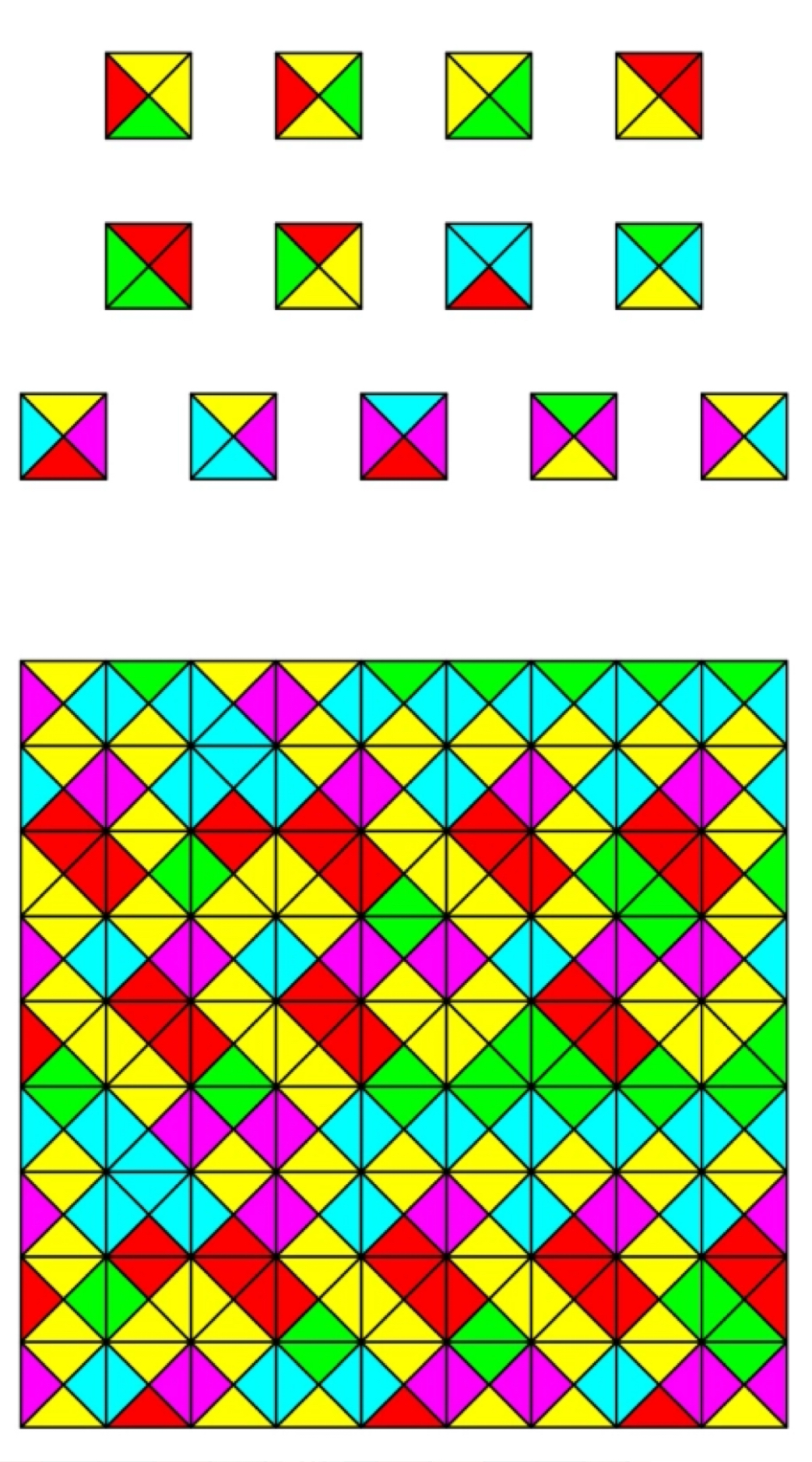}}
    \caption{On the left, a set of 13 Wang square tiles (over five colors). On the right, a piece of a Wang tiling with the given squares.}
    \label{fig:wang}
\end{figure}

Wang's initial interest in this geometric problem was driven by his work on the decidability of first-order logic $\forall\exists\forall$ formulas. He found that these formulas could be interpreted as domino problems, which raised a central question: Does there exist an algorithm that can, for an arbitrary set of Wang squares, determine in a finite number of steps if they admit a Wang tiling? 

Wang observed that the domino problem would be algorithmically decidable\footnote{This computability-theoretic concept of decidability is often used in the tiling literature. There is a closely related concept of  \emph{logical decidability} (also called \emph{provability in ZFC}), which refers to when a first-order sentence can be proved within the axiom system of ZFC. Logical decidability is stronger than algorithmic decidability: if a sentence is logically decidable, then it is also algorithmically decidable. Thus, a proof of the algorithmic undecidability of tilings immediately implies their logical undecidability.
See \cite[Section 1.1]{GT2} for more details.} if one could show that any finite set of Wang tiles that tiles the plane also has a tiling that is \emph{periodic}--repeating under translations by two independent vectors.
The idea is that an algorithm, given any finite set of Wang squares, could systematically attempt to tile increasingly large finite regions. If this domino problem is not solvable (i.e., the set does not admit a Wang tiling), compactness guarantees that the obstacle will be detected after finitely many steps. If the set does admit a Wang tiling, then by assumption it admits a periodic tiling; hence, after finitely many steps the algorithm will detect a finite region satisfying periodic boundary conditions and can be tiled by the given squares, guaranteeing that the given set tiles the plane. This gave rise to the study of the \emph{periodicity of tilings}.

A major breakthrough came with the proof that the fixed Wang tiling problem--determining if a finite set of Wang squares can create a tiling that includes a specific tile--is undecidable \cite{wang, buchi, kmw}. Building on this, Berger's landmark work \cite{Ber, Ber-thesis} provided the proof for the undecidability of the general Wang domino problem. Berger's proof demonstrated that the domino problem\footnote{A finite version of the domino problem is indicated as NP complete in \cite{GJ}.} is \emph{Turing complete}: by exploiting the hierarchical structure of \emph{substitution} tilings (filling of space by repeatedly inflating tiles and subdividing them according to a fixed rule), he showed that any Turing machine can be modeled as a Wang domino problem. Specifically, he showed that a given Turing machine halts if and only if its corresponding domino problem has no solution. This connection proved the undecidability of the Wang domino problem by reducing it to the undecidable halting problem \cite{turing}. Comprehensive surveys on the study of the undecidability of Wang's domino problem can be found in \cite{jv} and \cite[Section 1.1]{jr}.

\subsection{Translational tilings with multiple tiles.}
As noted by Golomb \cite{golomb}, a Wang tiling can be viewed as a specific type of translational tiling on the two-dimensional integer lattice $\Z^2$. Consequently, Berger’s proof implies that translational tilings with multiple tiles in $\Z^2$ are also undecidable. This undecidability reflects the wild behavior of multi-tile tilings, demonstrating an inherent and unpredictable complexity that defies systematic computation.

Although Berger’s work did not resolve the problem for a fixed, small number of tiles, his encoding approach paved the way for numerous advances in tiling theory, leading to undecidability results for tilings with fewer and fewer tiles; see, e.g., \cite{robinson,ollinger09,YZ,CZ,Yang24}.

 \subsection{Tiling language.}\label{subsec:language}
 In \cite{GT2}, we developed a \emph{tiling language} that allowed us to establish the undecidability of translational tilings with as few as two tiles.
Heuristically, the guiding idea was to reduce the number of tiles at the expense of increasing the dimension of the group.
This language enables us to express certain first-order sentences as systems of \emph{tiling equations} with $J$ tiles in a finitely generated Abelian group $G$, where both the group $G$ and the number of tiles $J$ remain fixed across all equations in the system (see \cite[Section 1.4]{GT2} for the definition of tiling equations with multiple tiles; and \cite[Example 1.14]{GT2}, \cite[Sections 4--5]{GT22} for tiling encoding examples). A crucial observation is that any such system of tiling equations can be expressed as a single tiling equation with the same number of tiles $J$ in a group whose dimension is increased by one \cite[Theorem 1.15]{GT2}.
We then create a library of constraints, similar to a programmer's collection of subroutines, expressible in our tiling language. This library is then be used to encode any domino problem as a tiling with only two tiles.

Note that a domino problem consists of a set of finitely many Wang squares $\cW\subset C^4$, where $C$ denotes the finite set of colors used in $\cW$. A Wang tiling with $\cW$ can be modeled as a function $a=(a_{\text{east}},a_{\text{south}},a_{\text{west}},a_{\text{north}})\colon \Z^2\to C^4$ such that for every $(n,m)\in \Z^2$ 
\begin{equation}\label{eq:domino1}
    a(n,m)\in \cW
\end{equation} 
and 
\begin{equation}\label{eq:domino2}
    a_{\text{east}}(n,m)=a_{\text{west}}(n+1,m) \; \text{ and } \; a_{\text{north}}(n,m)=a_{\text{south}}(n,m+1).
\end{equation}
We show that these sentences can be expressed in our tiling language as a tiling equation in $\Z^2\times G_0$ with two tiles (for a suitable finite Abelian group $G_0$). Thus, these two tiles admit a tiling of $\Z^2\times G_0$ if and only if the encoded domino problem is solvable. As the solvability of the Wang domino problem is undecidable, we obtain the undecidability of translational tilings with only two tiles in \emph{virtually-$\Z^2$ spaces}, i.e., spaces of the form $\Z^2\times G_0$ for arbitrary finite Abelian group $G_0$ \cite[Theorem 1.8]{GT2}. By pulling back this result (see \cite[Section 9]{GT2} for details), we then deduce the undecidability of translational tilings with two tiles in $\Z^d$ \cite[Theorem 1.9]{GT2}.

\begin{remark}
In our undecidability result with two tiles in spaces $\Z^2\times G_0$, neither the group $G_0$ nor its rank is fixed, but given as part of the input (along with a finite subset $F\subset \Z^2\times G_0$). In turn, the undecidability result for translational tilings with two tiles in $\Z^d$ we obtain is with the dimension $d$ not being fixed, but provided as part of the input.\footnote{While writing this article, a new preprint appeared on arXiv \cite{Kim25}, claiming to establish the undecidability of translational tilings with two polycube tiles in $\Z^3$.} Moreover, by rigidly inflating the latter result one can extend the undecidability of translational tilings with two tiles to the Euclidean space $\R^d$, where, again, the dimension $d$ is not fixed but is provided as part of the input.
\end{remark}

We see that wild behavior occurs even when restricting the number of tiles to as few as two. 
A natural follow-up question is the decidability of translational tilings with a \emph{single} tile, also known as \emph{translational monotilings}.
As discussed in \cite[Section 10]{GT2}, translational monotilings are substantially different from tilings with several tiles.  

\begin{remark}\label{rem:obstacle}
    In fact, the sentence \eqref{eq:domino1} could be encoded as a tiling equation in virtually-$\Z^2$ space with only one tile. It is the second sentence \eqref{eq:domino2}--the domino matching rules, which requires an additional tile to be encoded due to the simultaneous change in both the coordinate (east to west or south to north) \emph{and} the point ($(n,m)$ to $(n+1,m)$ or to $(n,m+1)$). If, however, we allow the finite group $G_0$ to be \emph{non-Abelian}, then we can encode both sentences as a tiling with only one tile in $\Z^2 \times G_0$ \cite[Theorem 11.2]{GT2}.
\end{remark}

In what follows, we dive into the theory of translational monotilings, and explore the strengths and weaknesses of this theory.

\section{Discrete translational monotilings}\label{sec:discrete}

Let $G=(G,+)$ be a finitely generated Abelian group, endowed with the discrete topology. Thus, $G$ is isomorphic to $\Z^d\times G_0$ for some $d\geq 0$ and a finite Abelian group $G_0$.  We say that a finite set $F\subset G$ \emph{tiles $G$ by translations} if there exists a set $A\subset G$ such that the translates 
\begin{equation}\label{eq:Ftranslates}
    F + a=\{f+a\colon f\in F\},\quad a\in A
\end{equation} of $F$ along $A$ form a \emph{partition} of $G$, i.e.,  for every point $g$ in $G$ there exists a \emph{unique} pair $(f,a)$ in $F\times A$ such that $g = f +a$. We refer to such $F$ as a \emph{tile} of $G$ and $A$ as a \emph{tiling of $G$ by $F$}, and say that the \emph{tiling equation} $F\oplus A=G$ holds.
In other words, $F$ tiles $G$ by translations along $A$ whenever the following two conditions are satisfied:
\begin{itemize}
    \item {\it (packing)} Each point in $F+A$ is covered exactly once by the sets \eqref{eq:Ftranslates}, meaning that $F\oplus A$ is well defined. 
    \item {\it (covering)} $F+A=G$.
\end{itemize}

We consider the \emph{indeterminate tiling equation}: \begin{equation}\label{eq:teq}
	    F \oplus X=G,
	\end{equation}
 where we view the finite set $F\subset G$ as given data and the indeterminate variable $X$ denotes an \emph{unknown} subset of $G$. We define the \emph{solution space} of the tiling equation \ref{eq:teq} as
 \begin{equation}\label{sol}
     \Tile(F;G)\coloneqq\{A\subset G\colon  F \oplus A= G\}.
 \end{equation} 
 Then, the given set $F$ tiles $G$ by translations if and only if the solution space \eqref{sol} is nonempty.

\begin{example}[Singleton tile]\label{ex1}
    Any singleton $\{x\}\subset G$ is a translational tile as $\{x\}\oplus G = G$. Indeed, when translating $F$ by every element in $A\coloneqq G$, every point of $G$ is covered exactly once. 
    In fact, this is the only tiling of $G$ by $\{x\}$, i.e.,  $\Tile(\{x\} ; G)=\{G\}$. This is the simplest form of a translational tiling.
\end{example} 
\begin{example}[Non-tiling example]
     The set $\{0,2,3\}$ is not a translational tile of $\Z$, as the set $\Tile(\{0,2,3\};\Z)$ is empty. Indeed, suppose $A\subset \Z$ were a tiling in $\Tile(\{0,2,3\};\Z)$; by translation invariance, we can assume that $0\in A$. Then, by the covering assumption, there is $a$ in $A\setminus \{0\}$ such that $1\in \{0,2,3\} +a$; hence $a\in \{-2,-1,1\}$. But this implies an overlap between the copies of $\{0,2,3\}$ at $a$ and $0$, contradicting the packing assumption. 
     This example demonstrates that not all finite sets can act as translational tiles.
\end{example}
\begin{example}[Discrete square tile]\label{ex3}
The discrete square $F=\{0,1\}^2$ tiles $\Z^2$ along 
    $$A_a\coloneqq \{(2n,2m+a(n))\colon n,m\in \Z\} \quad \text{as well as} \quad A^a\coloneqq \{(2n+a(m),2m)\colon n,m\in \Z\}$$ for every function $a\colon \Z\to \{0,1\}$. In the tiling $A_a$, the columns are sifted independently, while in $A^a$, the rows are sifted independently. See Figure \ref{fig:square}.
\end{example}

\begin{figure}[ht]
    \centering
    \includegraphics[width=10cm]{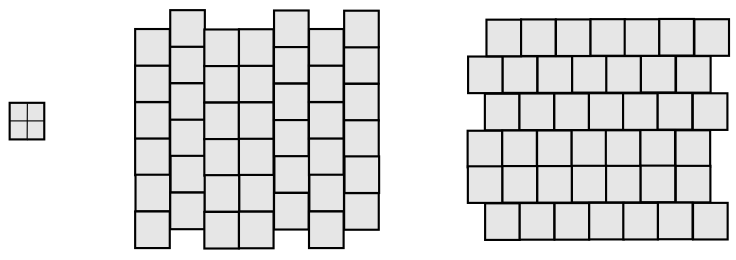}
    \caption{On the left, a discrete square tile $F=\{0,1\}^2$ (where a point $x$ in $\Z^2$ is illustrated by a continuous unit square $x+[0,1]^2$). In the middle, a piece of a tiling by $F$ of type $A_a$ with columns shifted independently, while on the right is a tiling $A^a$, whose rows are shifted independently.}
    \label{fig:square}
\end{figure}

\begin{example}[Disconnected tile]\label{ex4}
    For $F=\{0,2\}\times \{0,1\}$, we have that the set
    $$A_{a}^b\coloneqq \left\{(4n,2m+a(n))\colon n,m\in \Z\right\}\sqcup \left\{(4n+1+2b(m),2m)\colon n,m\in \Z\right\}$$ is in $\Tile(F; \Z^2)$ for every choice of functions $a,b\colon \Z\to \{0,1\}$. Here, there are independent shifts both in the vertical and horizontal directions. See Figure \ref{fig:disconnected}.
\end{example}

\begin{figure}[ht]
    \centering
    \includegraphics[width=8cm]{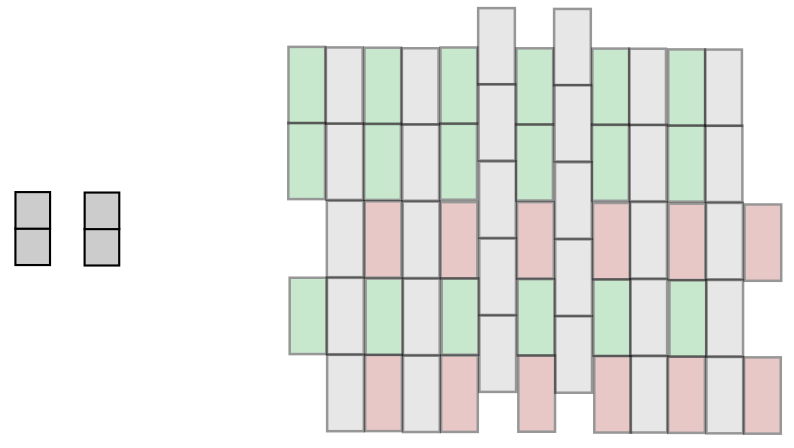}
    \caption{On the left, the discrete disconnected tile $F=\{0,2\}\times \{0,1\}$ (where a point $x$ in $\Z^2$ is illustrated by the continuous unit square $x+[0,1]^2$). On the right, a piece of a tiling $A_a^b$ by $F$ with both columns (grey) and rows (green and red) shifted independently.}
    \label{fig:disconnected}
\end{figure}

We are interested in the question of whether a given tiling equation \eqref{eq:teq} admits a solution, or, more generally: what does the solution space $\Tile(F;G)$ look like?

\subsection{Elementary properties.} Some structural properties of the space $\Tile(F;G)$ can be easily deduced from its definition.

\begin{itemize}
    \item \emph{Translation-invariance.} A fundamental property of tilings is that they are invariant under translation. 
It follows immediately from the definition of tiling that $\Tile(F;G)=\Tile((F-x);G)$ for every $x\in G$. In other words, $A$ is a tiling of $G$ by $F$ if and only if $A+x$ is a tiling of $G$ by $F$ for every $x\in G$. From a dynamical viewpoint, this property is useful. 
 
\item \emph{Density.} Every tiling $A$ in $\Tile(F;\Z^d)$ has a well-defined \emph{asymptotic density}, which is equal to $\frac{1}{|F|}$; i.e., for every $A\in \Tile(F;\Z^d)$ and  $x\in \Z^d$  the limit 
$$\lim_{N\to\infty} \frac{|(A-x)\cap \{-N,\dots,N\}^d|}{(2N+1)^d}$$
exists and is equal uniformly to $\frac{1}{|F|}$ (see, e.g., \cite[Lemma 1.2]{K} for details). The notion of density provides a quantitative measure of a set's distribution over its space--how spread out or clustered it is.
As such, this property imposes some degree of regularity on the structure of tilings by $F$.

\item \emph{Reflection-invariance.} By the commutativity of the group action, we have that $\Tile(F;G)=\Tile(-F;G)$, where $-F=\{-f\colon f\in F\}$  (see, e.g., \cite[Theorem 13]{szegedy} or the proof of \cite[Theorem 1.2(i)]{ggrt} for details). Since this property relies on the commutativity of the group action, it does not hold, e.g., when more tile isometries are allowed.
\end{itemize}

Translational monotilings have an additional algebraic structure that other tilings do not have, making them significantly more rigid. The rigid nature of translational monotilings gives rise to various structure problems.

\subsection{Different approaches and tools.}
Equipped with the topology of pointwise convergence, the space $\Tile(F;G)$ is compact and invariant under the action of the group $G$ by translation;
thus, it is a \emph{topological dynamical system}, which we can equip with a (Borel) probability measure to obtain a \emph{probability space}, or with an ergodic measure to obtain an \emph{ergodic system}. This gives rise to the use of probabilistic methods as well as ergodic theoretic methods to study the space $\Tile(F;G)$ (see, e.g., \cite{B,GT25}).

Also, note that the tiling $F \oplus A = G$ can be written in convolution form as 
$$\one_F * \one_A (x) =\sum_{f\in F} \one_A(x-f) =1, \quad \forall x\in G, $$
which looks tempting to analyze via the \emph{Fourier transform} (see \cite{K}, \cite[Section 2]{Kol} and the references therein). Taking distributional Fourier transform of $\one_F *\one_A=1$ (as $\one_A$ is not necessarily a measure but rather a tempered distribution), we obtain the equation $\ft{\one_F} \cdot \ft{\one_A} = \delta_0$ in frequency space (with $\delta_0$ being the Dirac distribution), which provides significant structural information about $\one_A$. In particular, we can deduce the necessary condition: 
\begin{equation}\label{eq:supp}
    \supp \ft{\one_A} \subset \{0\} \cup \zft{F}
\end{equation}
for $F\oplus A=G$, where $\zft{F}$ denotes the set of zeroes $\{\xi\in \widehat{G} \colon \ft{\one_F}(\xi)=0\}$ of the trigonometric polynomial $\zft{F}$ (this condition becomes sufficient as well when $\ft{\one_A}$ is locally a measure).  Thus, by analyzing the set $\ft{\one_F}$ we can deduce substantial structural information about the tiling $A$. 
For instance, by the continuity of the polynomial $\ft{\one_F}$ and since $\ft{\one_F}(0)=|F|$, the condition \eqref{eq:supp} implies a spectral gap in $\one_A$, which is a powerful piece of data to use when analyzing the structure of the tiling $A$ (see, e.g., \cite{B,LW,KL}).

\subsection{Dilation-invariance.} In addition to translation invariance, discrete translational tilings possess a unique dilation-invariance property, which is captured by the following \emph{dilation lemma}.

\begin{lemma}[Dilation lemma]\label{lem:dilation}
    Let $F$ be such that $\Tile(F;G)$ is nonempty. For sufficiently divisible\footnote{For instance, we can choose the number $q$ to be $p|F|$, where $p$ is the exponent of the finite Abelian group $G_0$ such that $G=\Z^d\times G_0$.} $q$ we have $$\Tile(F;G)\subset \bigcap_{r\Mod q=1} \Tile(rF;G),$$ where $rF=\{rf\colon f\in F\}$ for an integer $r$.
\end{lemma}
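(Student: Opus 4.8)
The plan is to work on the Fourier side, exploiting the structural necessary condition \eqref{eq:supp}. Fix $F$ and a tiling $A \in \Tile(F;G)$; write $G = \Z^d \times G_0$ with $G_0$ finite Abelian of exponent $p$, so that the dual group $\widehat G = \T^d \times \widehat{G_0}$, and the torsion part $\widehat{G_0}$ consists of points whose coordinates are roots of unity of order dividing $p$. From $\one_F * \one_A = \one_G$ we obtain, after taking distributional Fourier transforms, $\ft{\one_F}\cdot \ft{\one_A} = \delta_0$ in $\widehat G$, hence $\supp \ft{\one_A} \subset \{0\}\cup \zft{F}$. The goal is to show that for $q$ sufficiently divisible and every $r \equiv 1 \Mod q$ we also have $rF \oplus A = G$, i.e. $\one_{rF} * \one_A = \one_G$. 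Since $\ft{\one_{rF}}(\xi) = \ft{\one_F}(r^{-1}\xi)$ — here $r$ acts as an automorphism on the torsion coordinates because $\gcd(r,p)=1$ once $p \mid q$, and as honest dilation on the $\T^d$ coordinates — it suffices to check that $\ft{\one_F}(r^{-1}\xi)\,\ft{\one_A}(\xi) = \delta_0$, which by the support condition reduces to the claim that $\ft{\one_F}(\xi) = 0 \implies \ft{\one_F}(r^{-1}\xi) = 0$ for all $\xi$ in the support of $\ft{\one_A}$, together with the normalization $\ft{\one_F}(0) = |F| = \ft{\one_{rF}}(0)$ and a check that the density $1/|rF| = 1/|F|$ matches (so $\one_{rF}*\one_A$ has the right average, forcing the constant to be $1$ once its Fourier transform is supported at the origin).

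The heart of the matter is therefore the arithmetic statement: if $\ft{\one_F}(\xi)=0$ then $\ft{\one_F}(r^{-1}\xi)=0$ whenever $r \equiv 1 \Mod q$ for suitably divisible $q$. I would prove this via a Galois-theoretic argument of the type familiar from the Coven–Meyerowitz circle of ideas. The value $\ft{\one_F}(\xi) = \sum_{f\in F} e(-\langle f,\xi\rangle)$, where $\xi$ has all coordinates of the form (rational with bounded denominator) on the $G_0$-part, and on the $\Z^d$-part we may restrict attention to $\xi$ lying in $\supp \ft{\one_A}$. The key point: the relevant zeros are at points $\xi$ of finite order — this is where one uses that $\one_A$ is highly structured. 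Concretely, one shows that $\ft{\one_A}$, being the Fourier transform of a set of density $1/|F|$ satisfying the tiling equation, is supported on finitely many rational points of $\widehat G$ of order dividing $N$, where $N$ depends only on $|F|$ and $p$ (for instance via the fact that $\one_F * \one_A = 1$ localizes the support of $\ft{\one_A}$ to the zero set of a fixed trigonometric polynomial, whose zeros of relevance turn out to be torsion points — this is essentially the content behind the footnote's choice $q = p|F|$). Once $\xi$ has order dividing $N$ with $N \mid q$, the map $\xi \mapsto r^{-1}\xi$ is trivial on the $N$-torsion when $r \equiv 1 \Mod N$; more useful is the Galois action: $e(-\langle f, r\xi\rangle)$ is the image of $e(-\langle f,\xi\rangle)$ under the automorphism $\zeta_N \mapsto \zeta_N^r$ of $\Q(\zeta_N)$, so $\ft{\one_F}(r\xi)$ is the Galois conjugate of $\ft{\one_F}(\xi)$; since $0$ is fixed by all Galois automorphisms, $\ft{\one_F}(\xi)=0 \implies \ft{\one_F}(r\xi)=0$ for every $r$ coprime to $N$, in particular for $r \equiv 1 \Mod q$.

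Assembling these pieces: for $q$ divisible by $p|F|$ (hence by the order $N$ of all torsion points that can occur in $\supp\ft{\one_A}$) and any $r \equiv 1 \Mod q$, the function $\one_{rF} * \one_A$ has Fourier transform supported at $\{0\}$, with value at $0$ equal to $|rF|\cdot\frac{1}{|F|} = 1$ (using $|rF| = |F|$, valid since multiplication by $r$ is injective on $F$ when $\gcd(r,|F|)=1$); hence $\one_{rF}*\one_A \equiv 1$, i.e. $A \in \Tile(rF;G)$. Taking the intersection over all such $r$ and all $A \in \Tile(F;G)$ gives the lemma.

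The main obstacle I anticipate is the claim that $\supp \ft{\one_A}$ consists only of torsion points of controlled order — equivalently, that the relevant zeros of $\ft{\one_F}$ are roots of unity. One cannot expect $\ft{\one_F}$ itself to vanish only at torsion points (it generally does not), so the argument must genuinely use the tiling $A$: it is the interplay $\ft{\one_F}\cdot\ft{\one_A} = \delta_0$, combined with the fact that a tiling complement in $\Z^d \times G_0$ has rational structure (its Fourier transform is a discrete measure on rational points, by a compactness/periodicity-along-$X$ argument or by the theory of tilings by clusters), that pins the support of $\ft{\one_A}$ down. Making this localization precise — and quantitative enough to extract the explicit modulus $q = p|F|$ — is the crux; the Galois step is then essentially formal.
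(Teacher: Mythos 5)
Your Fourier-analytic strategy breaks down at exactly the point you flag as the crux: the claim that $\supp \ft{\one_A}$ consists of finitely many torsion points of controlled order. This is false in general. For $d\geq 2$ the zero set $\zft{F}$ of the trigonometric polynomial $\ft{\one_F}$ in $\T^d\times\widehat{G_0}$ is typically a real hypersurface containing a continuum of (mostly irrational) points, and a wild tiling such as those in Example~\ref{ex3} --- the square $\{0,1\}^2$ tiled along $A_a$ with an arbitrary, non-recurrent $a\colon\Z\to\{0,1\}$ --- has $\ft{\one_A}$ with genuinely continuous spectrum spread over this hypersurface. Your assertion that a tiling complement in $\Z^d\times G_0$ ``has rational structure'' or that its Fourier transform is ``a discrete measure on rational points'' is precisely what fails here: tilings in $\Z^d$ for $d\geq 2$ need not be periodic, nor even weakly periodic a priori, so there is no ``periodicity-along-$X$'' argument available. (In $d=1$ every tiling is periodic and your localization claim becomes true, but the dilation lemma is needed in all dimensions --- indeed, as the paper explains, it is a key \emph{input} to the proof of periodicity in $\Z^2$, so one cannot invoke periodicity to prove it.) Once the localization to torsion points fails, the Galois step, which is correct for torsion $\xi$, has nothing to act on.

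The proof the paper relies on (citing \cite{szegedy,hk,B,GT,GT25} and crediting Tijdeman) is algebraic and sidesteps the frequency side entirely. One works with the pointwise identity $\one_F*\one_A=1$ and a Frobenius (freshman's dream) congruence modulo a prime $p$ coprime to $|F|$ and to the exponent of $G_0$: multinomial coefficients vanish mod $p$, giving $\one_{pF}*\one_A\equiv\one_F^{*p}*\one_A=\one_F^{*(p-1)}*1=|F|^{p-1}\equiv 1\pmod p$ by Fermat's little theorem. Since $\one_{pF}*\one_A$ is a nonnegative integer-valued function with values at most $|F|$ and average $1$, choosing $p>|F|$ forces $\one_{pF}*\one_A\equiv 1$ pointwise, i.e.\ $A\in\Tile(pF;G)$; chaining such primes (and handling the finite torsion via the exponent of $G_0$) yields the full set of dilation factors $r\equiv 1\pmod q$. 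This argument makes no structural assumption on $\ft{\one_A}$ whatsoever, which is exactly what your approach is missing.
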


The proof of Lemma \ref{lem:dilation} is algebraic in nature and can be found in \cite[Theorem 10]{szegedy}, \cite[Corollary 11]{hk}, \cite[Proposition 3.1]{B}, \cite[Lemma 3.1]{GT},  and \cite[Lemma 3.1]{GT25}.  A one-dimensional version of this lemma previously appeared in \cite{tijdeman}. An analog for measurable tilings was proved in \cite[Theorem 1.2(i)]{ggrt}. See also \cite[Proposition 3.2]{hiprv} and \cite[Theorem 3.3]{imp} for a related Fourier-analytic dilation lemma for tilings in finite fields. Variants of this lemma for finitary configurations were also established in \cite[Lemma 3.2.2]{sz} and \cite[Lemma 7]{ks}. 

\begin{figure}[ht]
    \centering
    \includegraphics[width=7cm]{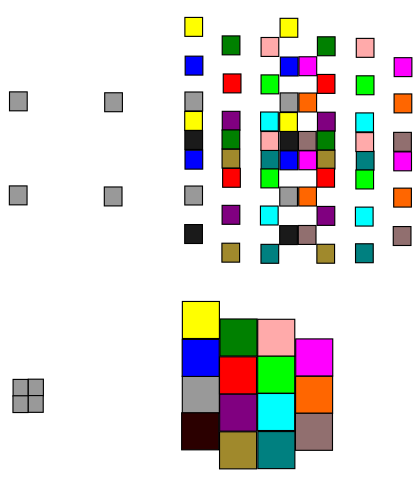}
    \caption{On the bottom, the square tile $F=\{0,1\}^2$ (where a point $x$ in $\Z^2$ is represented as $x+[0,1]^2$) and a piece of a tiling $A\in \Tile(F;\Z^2)$. For this $F$ we can let $q=4$. On the top is the dilation $rF=\{0,5\}^2$ of $F$ by the dilation factor $r=5$ with $r=1\Mod q$,  and we see that $A$ also form a tiling of $\Z^2$ by $rF$.}
    \label{fig:dilation}
\end{figure}

Lemma \ref{lem:dilation} plays a key role in the study of the structure of translational monotilings.
Observe that, informally, it implies long-term correlations\footnote{For example, it eliminates the existence of nontrivial \emph{factor of iid tilings} \cite[Theorem 1.5]{ggrt}.} between elements of a tiling $A$. Indeed, although we blow up the tile $F$, making it increasingly sparser by enlarging the dilation factor $|r|$, these blown up versions of $F$ still tile along $A$ (see Figure \ref{fig:dilation}). 
Informally, this shows that even far apart elements of $A$ can still ``feel'' each other in some way -- a reason to believe that the tiling $A$ exhibits some level of \emph{periodicity}.

\begin{remark}
    The \emph{commutative} nature of the group as well as the fact that there is only a \emph{single} tile are both crucial in the proof of the dilation lemma. Indeed, this lemma fails for non-Abelian group operations, or tilings by multiple tiles, and therefore it is significantly easier to establish wild behavior of such tilings, as described in Section \ref{sec:domino} and Remark \ref{rem:obstacle}.
\end{remark}

\subsection{Periodicity of monotilings.}

\begin{definition}[Periodicity and aperiodicity]
Let $G=\Z^d\times G_0$ be a finitely generated Abelian group,  $\Lambda$ be a subgroup of $G$ and $F$ be a finite subset of $G$.  
\begin{enumerate}
    \item[(i)] A {\bf function} $f$  on $G$ is \emph{$\Lambda$-periodic} if $f(x+\lambda)=f(x)$ for every $x\in G, \lambda\in \Lambda$. Moreover, if $\Lambda$ is a lattice (i.e., has finite index),  we say that $f$ is \emph{periodic}. (In the literature, this notion is sometimes referred to as \emph{strong periodicity} or \emph{full periodicity}.) If $f$ is not periodic, we say that it is \emph{non-periodic}.
    And, if $\Lambda$ is generated by a single vector, we say that $f$ is \emph{singly-periodic}.
\item[(ii)] A {\bf set} $E\subset G$ is \emph{$\Lambda$-periodic (resp. periodic, non-periodic, singly-periodic)} if the indicator function $\one_A$ is $\Lambda$-periodic (resp. periodic, non-periodic, singly-periodic). 
Moreover, when $d=2$, we say that $E$ is \emph{weakly-periodic} if it can be written as a finite disjoint union of singly-periodic sets. 

\item[(iii)]  A {\bf tiling} $A\in \Tile(F;G)$ is called \emph{$\Lambda$-periodic (resp. periodic, non-periodic, singly-periodic, weakly-periodic)} if the set $A\subset G$ is $\Lambda$-periodic (resp. periodic, non-periodic, singly-periodic, weakly-periodic).
\item[(iv)] A {\bf tiling equation} \eqref{eq:teq} is said to be \emph{aperiodic} if the solution space $\Tile(F;G)$ is nonempty and every tiling in $\Tile(F;G)$ is non-periodic.
\end{enumerate}
\end{definition}

To demonstrate these notions of periodicity, we can look back at the tiling examples above. 
 In Example \ref{ex1}, for instance, the tiling is a lattice and therefore is in particular periodic. 
 In Example \ref{ex3}, however, for most choices of $a\colon \Z\to \{0,1\}$, the tiling obtained will not be periodic. Nevertheless, all these tilings are singly-periodic. (In fact, in \cite{gbn} it was shown that any \emph{simply connected} tile $F\subset \Z^2$, in the sense that $F+[0,1]^2$ is simply connected in $\R^2$, admits only singly-periodic tilings.)
Moreover, by choosing a periodic function $a\colon \Z\to \{0,1\}$ we obtain a periodic tiling; therefore the tiling equation $\{0,1\}^2\oplus X=\Z^2$ is not aperiodic. 
 In Example \ref{ex4}, most choices of $a,b\colon \Z\to \{0,1\}$ will give rise to a tiling which is not even singly-periodic (hence, non-periodic). However, all of these tilings are weakly-periodic, and by choosing both functions $a$ and $b$ to be periodic we would obtain periodic tilings by $F=\{0,2\}\times \{0,1\}$. Thus, the corresponding tiling equation is not aperiodic.

\subsubsection{The periodic tiling conjecture.}
The \emph{periodic tiling conjecture} \cite{S74, GS, LW} is a paramount conjecture in the study of the structure of translational tilings. This conjecture asserts that if  $F$ tiles $G$ by translations, then it must admit at least one \textit{periodic} tiling.

\begin{conjecture}[Discrete periodic tiling conjecture]\label{con:ptc}  
 Let $G$ be a finitely generated Abelian group and $F$ be a finite subset of $G$.  Then the tiling equation $F \oplus X = G$ is \emph{not} aperiodic.
 \end{conjecture}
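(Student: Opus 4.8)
\emph{Proof proposal.}
The plan is to argue by induction on the dimension, after first disposing of the finite part of the group. Write $G = \Z^d \times G_0$. If $F$ tiles $G$, then one would like to reduce to the case $G = \Z^d$: a tiling of $\Z^d \times G_0$ can be regarded as $|G_0|$ interlocking tilings of $\Z^d$, and a periodic tiling of the latter can be reassembled into a periodic tiling of the former; this bookkeeping is routine and I would treat it as a preliminary reduction. The base case $d \le 1$ is then the classical observation that \emph{every} tiling of $\Z$ by a finite set is periodic: if $F \subset \{0,1,\dots,n\}$ with $0,n \in F$, then in any $A \in \Tile(F;\Z)$ the pattern of $\one_A$ on a window of length $n$ forces, step by step and in both directions, where every further translate of $F$ must sit (covering dictates the next placement, packing forbids overlaps); since there are only finitely many window patterns, two windows agree, and translation-invariance upgrades this to genuine periodicity of $A$.

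For the inductive step I would lean on the two sources of rigidity recorded in the excerpt. The first is the dilation lemma (Lemma~\ref{lem:dilation}): for sufficiently divisible $q$, every $A \in \Tile(F;\Z^d)$ also lies in $\Tile(rF;\Z^d)$ for all $r \equiv 1 \Mod q$. The second is the Fourier support condition $\supp \ft{\one_A} \subset \{0\} \cup \zft{F}$, where $\zft{F}$ is the zero set of a single trigonometric polynomial. The idea is that dilation multiplies the frequencies of $\one_A$ by every $r \equiv 1 \Mod q$ without leaving $\zft{F}$, which should force the non-trivial part of $\ft{\one_A}$ to concentrate on finitely many rational subtori; transported back to physical space this is a \emph{structure theorem}, the cleanest instance being Bhattacharya's result that in dimension $d=2$ every tiling by $F$ is weakly-periodic, i.e.\ a finite disjoint union of singly-periodic sets. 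Once a tiling is known to be weakly periodic, an averaging/pigeonhole argument over the finitely many ``slopes'' occurring in its decomposition lets one replace the components by commensurable ones and thereby manufacture a fully periodic tiling in $\Tile(F;\Z^2)$, which settles $d=2$ (and, with the reduction above, all finitely generated abelian $G$ of $\Z$-rank at most $2$).

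The hard part — and, I expect, the place the whole strategy breaks — is the passage to $d \ge 3$. The dilation lemma survives, but it no longer pins the tiling down: $\zft{F}$ can be a genuinely high-dimensional, highly non-linear variety, and the plane argument that produced weak periodicity has no higher-dimensional analogue. Worse, the encoding philosophy of Section~\ref{sec:domino} — lowering the number of tiles at the cost of raising the dimension, and realizing domino- and Sudoku-type matching constraints as tiling equations — strongly suggests that in sufficiently high dimension one can engineer a single tile all of whose tilings are forced to be non-periodic. So the honest conclusion of this proposal is that the inductive step cannot be carried through uniformly in $d$, and that Conjecture~\ref{con:ptc} as stated should in fact be expected to \emph{fail} for large $d$; the statement the above methods actually prove is its restriction to $d \le 2$.
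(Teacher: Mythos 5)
You have correctly identified the essential point: what is labelled here a ``conjecture'' is not in fact a theorem of the paper — it is \emph{disproved} in the paper, via the construction in \cite{GT22} of an aperiodic monotiling in a virtually-$\Z^2$ space $\Z^2 \times G_0$, and the article surveys exactly the low-dimensional positive results ($\Z$, virtually-$\Z$, $\Z^2$) and the high-dimensional failure that your proposal independently intuits. So your overall verdict — that the statement should fail in high dimension and that the dilation-lemma/Fourier/weak-periodicity machinery only settles low dimensions — is the right one.

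There is, however, a concrete error in your preliminary step that is worth flagging, because it sits precisely at the fault line the paper exposes. You propose to reduce $G = \Z^d \times G_0$ to $\Z^d$ by viewing a tiling of $\Z^d \times G_0$ as $|G_0|$ interlocking tilings of $\Z^d$ and call this bookkeeping ``routine.'' It is not, and it cannot be: the paper's counterexample lives exactly in the gap. Conjecture~\ref{con:ptc} is \emph{true} for $G = \Z^2$ (Bhattacharya \cite{B}, reproved quantitatively in \cite{GT}) but \emph{false} for $G = \Z^2 \times G_0$ for a suitable finite Abelian $G_0$ \cite{GT22}. The $|G_0|$ ``layers'' of such a tiling are not tilings of $\Z^2$ by $F$ or by any single translate-class of tiles — the slices $F_x = F \cap (\{x\}\times G_0)$ interact across cosets, and one cannot simply reassemble periodic tilings of $\Z^2$ into a periodic tiling of $\Z^2 \times G_0$. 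Indeed this slice interaction is exactly what makes the tiling language of \cite{GT2} and the reversible structure theorem (Theorem~\ref{thm:vir2structure}) powerful enough to encode aperiodic Sudoku constraints. Relatedly, your closing claim that the conjecture should fail for $d \ge 3$ overshoots what is known: the $\Z^3$ case is explicitly recorded in the paper as open (Question~\ref{q:Z3}); the known counterexamples require either the $G_0$ factor or a $\Z^d$ of large unspecified $d$ obtained by pulling back through \cite{bgu}. So the honest boundary of what the survey establishes is: true for $\Z$, virtually-$\Z$, and $\Z^2$; false for virtually-$\Z^2$ and for $\Z^d$ with $d$ sufficiently large; open in between.

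Your sketch of the positive cases is otherwise faithful to the paper's narrative (pigeonhole for $\Z$, dilation lemma plus the structural decomposition and Weyl equidistribution for weak periodicity in $\Z^2$, then Lemma~\ref{lem:weaktoptc} to upgrade weak periodicity to periodicity), though the paper routes the $\Z^2$ argument through the algebraic/equidistribution structure theorem (Theorem~\ref{thm:2structure}) rather than through the Fourier-support condition you gesture at; the Fourier heuristic is mentioned in the paper only as motivation, not as the engine of the $\Z^2$ proof.
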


An illustration of the importance of the study of the periodic tiling conjecture is its tight connection with the question of the decidability of tilings. Recall that if any tiling admits a periodic solution, then the tiling problem is decidable, i.e., there is an algorithm which computes (in finite time), upon any input of $G$, $F\subset G$, whether the solution space $\Tile(F;G)$ is empty or not (see Section \ref{sec:domino} for a description of the algorithm). As discussed in Section \ref{sec:domino}, tilings with multiple tiles are known to be undecidable; and, in fact, the proof \cite{Ber,Ber-thesis} of this undecidability result consists of a construction of aperiodic substitution tilings with multiple tiles.
 
The study of the periodic tiling conjecture attracts many researchers and involves methods from many different areas such as geometry \cite{gbn,ken,err}, Fourier analysis \cite{LW}, combinatorics \cite{GT,GT2}, ergodic theory and probability \cite{B,GT25}, commutative algebra \cite{szegedy,hk,B,GT}, model theory \cite{GT2,GT23,GT25}, and computability theory \cite{Ber,GT23}.

The following partial results supporting Conjecture \ref{con:ptc} are known:

\begin{itemize}
    \item Conjecture \ref{con:ptc} is trivial when $G$ is a finite Abelian group, since in this case all subsets of $G$ are periodic.
\item  In \cite{N,Sidorenko}, Conjecture  \ref{con:ptc}  was established for $G=\Z$ using a simple pigeonhole-principle argument.  In fact, it was shown that \emph{every} tiling in $\Tile(F;\Z)$ is periodic. In \cite[Corollary 3.5]{GT}, we gave an alternative proof, relying on Lemma \ref{lem:dilation}, which improves the bound on the universal period and on the computational complexity in turn.
\item In \cite[Section 2]{GT2} we proved Conjecture \ref{con:ptc} in the virtually-$\Z$ case, i.e., for groups of the form $G = \Z \times G_0$, where $G_0$ is any finite Abelian group. The proof adapts the pigeonhole-principle argument of \cite{N} to this case.
\item For $G=\Z^2$, Conjecture \ref{con:ptc} was established by Bhattacharya \cite{B} using ergodic theory methods.  In \cite{GT} we gave an alternative (quantitative) proof of this result and, furthermore, showed that every tiling in $\Z^2$ by a single tile is \textit{weakly periodic}. Both arguments rely heavily on Lemma \ref{lem:dilation}.    
\item In \cite{szegedy}, Conjecture \ref{con:ptc} was proven in $\Z^d$ when the size $|F|$ of $F$ is prime or equal to $4$, using only commutative algebra and elementary number theory. 
\end{itemize}

   Despite these positive evidences for Conjecture \ref{con:ptc}, we recently established that the conjecture is false already for virtually-$\Z^2$ spaces by an explicit construction in a group $\Z^2\times G_0$ for some finite Abelian group $G_0$ \cite{GT22}. Using the reduction \cite[Corollary 1.2]{bgu}, we then conclude the failure of Conjecture \ref{con:ptc} in $\Z^d$ for sufficiently large $d$. Moreover, in \cite{GK}, we showed that Conjecture \ref{con:ptc} fails in sufficiently high dimensions even when restricted to \emph{connected tiles}, i.e., tiles $F\subset \Z^d$ such that $F+[0,1]^d$ is connected in $\R^d$.

In what follows, building on Lemma \ref{lem:dilation}, we introduce a structure theory for translational monotiling and describe how to exploit this theory to study Conjecture \ref{con:ptc}.

\section{Reversible structure theorem}
In this section, we deduce a reversible structure theorem for discrete translational monotilings from the Lemma \ref{lem:dilation}. This theorem plays a crucial role in our studies, since it completely captures the tiling property.

\begin{theorem}[Reversible structure theorem]\label{thm:structure}
Let $d\geq 1$, $G_0$ be a finite Abelian group, and $G=\Z^d\times G_0$ a finitely generated Abelian group. Suppose $F$ is a finite subset of $G$. For $x\in \Z^d$ let $F_x$ denote the slice $F\cap \{x\}\times G_0$ of $F$ and let $S_F\coloneqq \{x\in\Z^d \colon F_x\neq \emptyset\}$. Then there exist a sufficiently divisible integer $q$ and a finite set\footnote{We can take $V$ to be the set of directions of the (nonzero) vectors in $S_F-S_F$.} $V$ of primitive, nonzero, mutually incommensurable vectors in $\Z^d$ such that for every $A\subset G$ the following are equivalent:
\begin{enumerate}
    \item[(i)]  $F \oplus A = G$.
    \item[(ii)] For every $f\in S_F$ there is a decomposition 
    \begin{equation}\label{eq:decomp}
        \one_{F_f} * \one_A = 1 - \sum_{v \in V} \varphi_{f,v},
    \end{equation}  
    where for each $v\in V$, the function $\varphi_{f,v}: G \to [0,1]$ is a $\langle qv\rangle$-periodic (viewing $\Z^d$ as a subgroup of $G$) such that $\varphi_v\coloneqq \sum_{f\in S_F} \varphi_{f,v}$ is periodic and  
    \begin{equation}\label{eq:sumc}
       \sum_{v\in V}\varphi_{v} =|S_F|-1.
    \end{equation}
\end{enumerate} 
\end{theorem}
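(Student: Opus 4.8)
The plan is to derive the equivalence from the dilation lemma (Lemma~\ref{lem:dilation}) by a ``telescoping'' argument over dilation factors, analyzing the tiling equation slice by slice along the $\Z^d$-direction. First I would fix $q$ divisible enough that Lemma~\ref{lem:dilation} applies, and note that for each slice $f \in S_F$ the function $c_f \coloneqq \one_{F_f} * \one_A \colon G \to \Z_{\ge 0}$ satisfies $\sum_{f \in S_F} c_f = \one_F * \one_A$. Thus (i) says precisely that $\sum_{f\in S_F} c_f \equiv 1$, while each $c_f$ is a nonnegative integer-valued function. The content of the theorem is that this single scalar equation can be replaced by a system of $\langle qv\rangle$-periodic ``defect'' functions $\varphi_{f,v}$; the role of the vectors in $V$ is that they record the directions along which the slices $F_f$ can be translated without interacting, so that a one-dimensional periodicity in direction $v$ suffices.

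The key steps, in order: (1) Apply Lemma~\ref{lem:dilation} to get that $A$ simultaneously tiles by $rF$ for all $r \equiv 1 \Mod q$; comparing the tiling equations for $rF$ across different $r$ and using that $rF$ has slices $(rF)_{rx} = $ (a shifted copy of) $F_x$, extract linear relations among the translates of the $c_f$ by multiples of the $v \in V$. (2) Use these relations, together with the boundedness of the $c_f$ (each is at most $|F|$) and the density fact that $\one_A$ has density $1/|F|$, to show that the ``second difference'' of each $c_f$ along each direction $v \in V$ vanishes after passing to the scale $q$ — i.e., that $c_f$ decomposes into pieces each of which is $\langle qv\rangle$-periodic. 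Concretely, set $\varphi_{f,v}$ to be the appropriate telescoped average of $1 - c_f$ in direction $v$, and verify it lands in $[0,1]$. (3) Sum over $f \in S_F$ and over $v \in V$: the defining relation $\one_{F_f} * \one_A = 1 - \sum_v \varphi_{f,v}$ forces, upon summing over $f$, that $\sum_{f} c_f = |S_F| - \sum_v \varphi_v$, and combined with (i) $\Leftrightarrow \sum_f c_f = 1$ this gives \eqref{eq:sumc}. For the converse (ii)$\Rightarrow$(i), simply sum \eqref{eq:decomp} over $f \in S_F$ and invoke \eqref{eq:sumc} to collapse the right-hand side to $1$, then check nonnegativity and the packing/covering dichotomy to conclude $F \oplus A = G$ (here one uses that $\one_F * \one_A \le 1$ pointwise together with $\sum = 1$ forces exact tiling, or that the $c_f$ being forced to $\{0,1\}$-type values is automatic from the density normalization).

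I expect the main obstacle to be step (2): extracting genuine $\langle qv\rangle$-periodicity of the defect functions from the dilation identities, rather than merely some weaker almost-periodicity or boundedness statement. The dilation lemma gives infinitely many tiling equations but a priori only along the arithmetic progression $r \equiv 1 \Mod q$; one must argue that the combinatorial constraints they impose are rigid enough to force exact periodicity at scale $q$ in each incommensurable direction separately, and that the directions in $V$ genuinely decouple (this is where mutual incommensurability and primitivity of the vectors in $V$ are essential — it prevents a defect in direction $v$ from being confounded with one in direction $v'$). A clean way to organize this is to first prove it in the ``base case'' $d = 1$ (where $V$ is a single direction and the statement reduces to the known periodicity of monotilings of $\Z$, cf.\ the results of \cite{N, Sidorenko} and \cite[Corollary 3.5]{GT}), and then induct on $d$, peeling off one incommensurable direction at a time by restricting to lines in that direction and applying the one-dimensional result fiberwise, with the dilation lemma supplying the compatibility needed to glue the fiberwise decompositions into globally periodic $\varphi_{f,v}$.
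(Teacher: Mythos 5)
Your reverse direction (ii)$\Rightarrow$(i) is correct and matches the paper: summing \eqref{eq:decomp} over $f\in S_F$ and invoking \eqref{eq:sumc} gives $\one_F*\one_A=1$ pointwise, which is already the definition of $F\oplus A=G$. (Your hedging at the end of that step — invoking a ``packing/covering dichotomy'' or density normalization — is unnecessary; no further check is needed once the convolution identity holds everywhere.) The derivation of \eqref{eq:sumc} from (i) and \eqref{eq:decomp} by summing over $f$ also matches the paper.

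For the forward direction you should be aware that the paper does not actually prove the decomposition \eqref{eq:decomp} on the page: it cites it wholesale from \cite[Theorem 3.2]{GT25} and only supplies the short extra steps (periodicity of $\varphi_v$ via the projection operator $\pi_{qv}$, and the identity \eqref{eq:sumc}). So in attempting a self-contained construction you are filling in a gap the paper deliberately leaves, and it is there that your sketch has a genuine hole. Defining $\varphi_{f,v}$ as a ``telescoped average of $1-c_f$ in direction $v$'' makes each summand $\langle qv\rangle$-periodic essentially by construction, but it does not make the summands add up to $1-c_f$: there is no a priori reason for $1-c_f$ to equal the sum of its directional averages. The work that the dilation lemma actually does in the cited argument is of a different nature — one uses the simultaneous tiling equations $\one_{rF}*\one_A=1$ for $r\equiv 1\Mod q$ to show that an iterated difference operator $\prod_{v\in V}\partial_{q v}$ annihilates $1-c_f$ (compare Step~1 in the paper's outline of the proof of Theorem~\ref{thm:2structure}), and the decomposition into $\langle qv\rangle$-periodic pieces is then extracted algebraically from that vanishing. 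Your phrase ``second difference vanishes'' gestures at this but does not identify the right operator or explain why the dilation lemma forces it to vanish, and the nonnegativity and upper bound (that each $\varphi_{f,v}$ take values in $[0,1]$, not merely be bounded) also need an argument and are not automatic from averaging.

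The final suggestion to induct on dimension by restricting to lines and applying the $d=1$ case fiberwise is not how the paper (or the cited \cite{GT25}) proceeds; those arguments work directly in $\Z^d$ with the difference-operator machinery. Your fiberwise approach is not obviously wrong as a strategy, but the gluing step — making the fiberwise decompositions compatible across parallel lines so that each $\varphi_{f,v}$ is a well-defined function on all of $G$ with the claimed periodicity — is exactly where the real content would sit, and you have not addressed it; one would also need to explain why the relevant one-dimensional slices of $1-c_f$ inherit a tiling-type equation to which the $d=1$ result applies, which they do not in any straightforward sense.
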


\begin{remark}\label{rem:G0trivial}
    Note that when $G_0$ is trivial, \eqref{eq:decomp} is a decomposition of the tiling $\one_A$ itself, and $|S_F|=|F|$ in this case. 
\end{remark}

\begin{proof}[Proof of Theorem \ref{thm:structure}]
    Suppose first that (i) holds. Then \eqref{eq:decomp} follows from \cite[Theorem 3.2]{GT25}, where the key ingredient in the argument is Lemma \ref{lem:dilation}.
    Moreover, by (i), applying the projection operator $\pi_{qv}$ to $\sum_{f\in S_F}\one_{F_f} * \one_A = 1$ (or, averaging along the direction $qv$, see \cite[Equation (2.3)]{GT25}) we obtain that the function $\varphi_v$ is periodic, and by \eqref{eq:decomp} we also have
    \begin{equation}\label{eq:sumSf}
        \sum_{f\in S_F} \one_{F_f}*\one_A= |S_F|-\sum_{v\in V}\varphi_v,
    \end{equation}
    giving (ii) as the left hand side is equal to the constant function $1$. 
    Conversely, if (ii) holds, then by \eqref{eq:decomp} we have \eqref{eq:sumSf}, which, by \eqref{eq:sumc} and the fact that $\one_F*\one_A =\sum_{f\in S_F} \one_{F_f}*\one_A$, implies (i). 
\end{proof}

\begin{remark} 
A related structural decomposition of tilings in $\Z^2$ was previously established in \cite{B}.
    In this paper, Bhattacharya introduces an ergodic theoretic approach to the study of the structure of translational tilings and Conjecture \ref{con:ptc}, by viewing $\Tile(F;\Z^2)$ as an ergodic dynamical system (a tiling $A$ is thus viewed as an ergodic point process, rather than an individual set).  Using the dilation lemma \cite[Proposition 3.1]{B}, he analyses the spectral measure of the stationary point process $A$ to establish a structural decomposition 
     \begin{equation}\label{eq:Bstructure}
    \one_A=_\as  \pi_{h_1}(\one_A) +\dots +\pi_{h_m}(\one_A) +g,
     \end{equation}
     for this ergodic point process (almost surely), where $\pi_h(\one_A)$ is obtained (a.s.) by averaging the translates of the stationary point process $A$ in the direction $h$ and the function $g$ is (a.s.) periodic \cite[Theorem 3.3]{B}.
\end{remark}

We noted above that the dilation lemma (Lemma \ref{lem:dilation}) implies the existence of some kind of long-term correlations in a tiling. Theorem \ref{thm:structure} makes these correlations precise: every tiling can be described as a finite sum of singly-periodic functions with some additional properties. 
Thus, this theorem allows us to convert the periodic tiling conjecture into an equivalent conjecture involving a bounded number of singly-periodic functions $\varphi_{f,v}$, $f\in S_F, v\in V$ on $G$, which we found to be a more tractable formulation.
 The structural decomposition \eqref{eq:decomp} is particularly powerful in low dimensions. Indeed, single-periodicity is closer to periodicity when the dimension is low. However, one can suspect that as the dimension increases, this structural information becomes weaker, indicating possible wild behavior of high-dimensional tilings. 

 In the following sections, we describe how this structure theorem is exploited to prove not only positive results toward Conjecture \ref{con:ptc}, but also its failure.

\section{Structured monotilings: periodicity in low dimensions}\label{sec:low}

In this section we exploit Theorem \ref{thm:structure} to study the periodicity of tilings in low dimensions.

\subsection{The structure of one-dimensional monotilings.}
In \cite{N}, the periodicity of any tiling $A\in \Tile(F;\Z)$ was established  using a pigeonhole principle argument--resulting in an exponential upper bound $2^{\diam(F)}$ on the periods. 
Observe that using Remark \ref{rem:G0trivial}, the decomposition \eqref{eq:decomp} immediately reestablishes this result. Moreover, in \cite[Corollary 3.5]{GT} we used this decomposition to establish the existence of a \emph{universal} period of all tilings $A\in \Tile(F;\Z)$, which is bounded above by $|F|\diam(F)^{|F|-1}$. 

For a virtually-$\Z$ space $G=\Z\times G_0$ (where $G_0$ is a finite Abelian group), using the decomposition \eqref{eq:decomp} of the slices and the slices reparation machinery introduced in \cite[Section 5]{GT}, a tiling $A\in \Tile(F; G)$ can be ``repaired'' to be periodic by applying the machinery in \cite[Section 5]{GT}, reestablishing the periodic tiling conjecture in any virtually-$\Z$ space originally proved in \cite[Section 2]{GT2}.

The rest of this section is devoted to the two-dimensional case, which is significantly more complicated.

\subsection{The structure of two-dimensional monotilings.}

In \cite{B}, Bhattacharya proved the periodic tiling conjecture in $\Z^2$ using the decomposition \eqref{eq:Bstructure} by a rather complicated analysis. 
In this section, we describe how the conjecture can be easily deduced from the structural decomposition \eqref{eq:decomp}.

Let $F$ be a tile of $\Z^2$. Recall that to prove the periodic tiling conjecture in $\Z^2$ one has to locate a periodic tiling in the solution space $\Tile(F;\Z^2)$.
However, thanks to the following lemma \cite[Proposition 2.3]{B}, it suffices to establish the existence of a weakly-periodic tiling in $\Tile(F;\Z^2)$:

\begin{lemma}[From weak-periodicity to periodicity]\label{lem:weaktoptc}
Let $F$ be a finite subset of $\Z^2$ and suppose that $\Tile(F;\Z^2)$ contains a weakly-periodic set, then it also contains a periodic set.
\end{lemma}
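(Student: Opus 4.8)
The plan is to start from a weakly-periodic tiling $A \in \Tile(F;\Z^2)$, write it as $A = A_1 \sqcup \cdots \sqcup A_m$ with each $A_i$ singly-periodic, say with period vector $w_i \in \Z^2 \setminus \{0\}$, and then reorganize these pieces into a tiling that is periodic in \emph{both} directions. The first step is to reduce to a normal form: after refining the decomposition, we may assume all the direction vectors $w_i$ fall into finitely many distinct directions; grouping terms, write $A = \bigsqcup_{v} B_v$, where the union is over a finite set of primitive directions $v$ and $B_v$ is $\langle n_v v\rangle$-periodic for some $n_v \geq 1$. If there is only one direction present, then (using the density property, which forces $A$ to have density $1/|F|$, together with the dilation lemma or a direct pigeonhole along $v$) one argues that $A$ is in fact doubly-periodic already, or one shifts the single-direction tiling into a periodic one column-by-column as in Example~\ref{ex3}; the substantive case is when at least two directions occur.

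The key step is the following rigidity input, which I expect to be available from the structure theory built on Lemma~\ref{lem:dilation} (in the form of Theorem~\ref{thm:structure}): the singly-periodic pieces $B_v$ are \emph{not} free to be chosen independently, because the tiling equation forces the ``averaged'' functions $\varphi_v = \sum_{f \in S_F}\varphi_{f,v}$ to be genuinely periodic and to satisfy $\sum_v \varphi_v = |S_F|-1$. Concretely, fix one direction $v_0$ appearing in the decomposition. I would \emph{symmetrize} the tiling in the $v_0$-direction: replace $A$ by a new configuration obtained by averaging/reselecting the pieces transverse to $v_0$ so that the resulting set $A'$ is periodic in the $v_0$-direction while the remaining pieces in the other (finitely many, transverse) directions are undisturbed. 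The content is that this operation preserves the property of being a tiling by $F$ — this is where Theorem~\ref{thm:structure}(ii) is used: checking (i) $\iff$ (ii), it suffices to verify that the modified $\varphi_{f,v}$ still have values in $[0,1]$, that $\varphi_{v_0}$ is still periodic (now automatic, since we made $A'$ genuinely $\langle w_{v_0}\rangle$-periodic), and that the constraint \eqref{eq:sumc} still holds (it does, because symmetrizing along $v_0$ does not change the sum $\sum_v \varphi_v$). Iterating over the finitely many directions $v_0, v_1, \dots$ one by one — at each stage the previously-achieved periodicity is compatible with, and not destroyed by, the next symmetrization because distinct $v_i$ are incommensurable — terminates in a configuration $A^\ast$ that is periodic in every direction present, hence periodic; and $A^\ast \in \Tile(F;\Z^2)$ by Theorem~\ref{thm:structure}.

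The main obstacle, and the step requiring the most care, is showing that the symmetrization in a single direction $v_0$ genuinely outputs a \emph{tiling} rather than merely a configuration of the correct density: one must verify the packing condition survives, i.e. that after averaging the transverse slices the functions $\varphi_{f,v}$ stay in $[0,1]$ and reassemble to an honest $0$--$1$-valued indicator. This is exactly the kind of ``slice repair'' manipulation referenced earlier in the excerpt for the virtually-$\Z$ case (the machinery of \cite[Section~5]{GT}); the two-dimensional version needs the extra fact, coming from incommensurability of the $v_i$, that a function periodic in direction $v_0$ and a function periodic in an incommensurable direction $v_1$ interact rigidly — a periodic function that is also invariant under two incommensurable translations is (after passing to the right sublattice) genuinely periodic. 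Once that linear-algebraic observation is in hand, the iteration closes and yields the periodic tiling.
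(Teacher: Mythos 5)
Your approach diverges substantially from the paper's, and the divergence introduces a gap that I don't see how to close.

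The paper's proof does not symmetrize or iteratively modify the tiling $A$ one direction at a time. Instead, starting from $A = A_1 \sqcup \cdots \sqcup A_m$ with each $A_j$ singly-periodic, it works with the \emph{image sets} $E_j := F \oplus A_j$. The two ingredients are: (1) each $E_j$ is \emph{fully} periodic (this follows from algebraically analyzing $\sum_j \one_{E_j} = 1$, or from Theorem~\ref{thm:structure}(ii)); and (2) for any singly-periodic $S$ with $F \oplus S$ periodic, there exists a periodic $S'$ with $F \oplus S' = F \oplus S$ (proven by a pigeonhole argument adapted from Newman, or by the slice-repair machinery of \cite[Section~5]{GT}). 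One then replaces each $A_j$ by such an $A_j'$ \emph{independently}: since the $E_j$ are pairwise disjoint and $\bigsqcup_j E_j = \Z^2$, the union $A' = \bigsqcup_j A_j'$ is automatically a tiling, and it is periodic. The crucial organizing idea — which your proposal does not use — is that the images $E_j$ are periodic and partition $\Z^2$, so the pieces can be repaired without any coordination between them.

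Your proposal instead tries to iteratively ``symmetrize'' $A$ in one direction $v_0$ at a time, claiming to produce an $A'$ that is periodic in the $v_0$-direction ``while the remaining pieces in the other directions are undisturbed.'' This is internally inconsistent: making the whole configuration $\langle w_{v_0}\rangle$-invariant would require changing \emph{every} piece $B_{v_1}, B_{v_2}, \dots$ to be invariant under that translation as well, not just $B_{v_0}$, yet you simultaneously want the transverse pieces left alone. The symmetrization operation itself is never concretely defined (``averaging/reselecting'' a $0$--$1$-valued set is not obviously meaningful), and the claimed bookkeeping via Theorem~\ref{thm:structure}(ii) — that $\sum_v \varphi_v = |S_F| - 1$ is preserved and the modified $\varphi_{f,v}$ remain $[0,1]$-valued — is asserted rather than verified; note that the $\varphi_{f,v}$ come from a decomposition of $\one_{F_f} * \one_{A'}$ for the \emph{new} $A'$, and there is no direct identification of $\varphi_v$ with the piece $B_v$. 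You explicitly flag this as ``the main obstacle,'' and indeed it is exactly where the proof is missing. The paper sidesteps all of this by repairing each piece in isolation against the fixed periodic target $E_j$; that independence is what makes the argument close.
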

 
 Suppose that $A\in \Tile(F;\Z^2)$ is weakly-periodic, i.e., $A=A_1\sqcup \dots\sqcup A_m$ for singly-periodic $A_1,\dots,A_m\subset \Z^2$.
     The proof of Lemma \ref{lem:weaktoptc} consists of two ingredients:
     \begin{enumerate}
         \item Sowing that for every $1\leq j\leq m$ the set $F\oplus A_j=E_j$ is periodic.
         \item Showing that if a set $S\subset \Z^2$ is singly-periodic and such that $F\oplus S$ is periodic, then there exists a periodic set $S'\subset \Z^2$ with $F\oplus S'=F\oplus S$. 
     \end{enumerate} 
      The first is proven in \cite[Proposition 2.1]{B} by algebraically analyzing the equation $\sum_{j=1}^m \one_{E_j}=1$. (Note that this part also follows from Theorem \ref{thm:structure}(ii).)
      The second part is proven in \cite[Section 2]{B} by adapting Newman's pigeonhole principle argument \cite{N}. In \cite[Section 5]{GT} an alternative proof of the second part was suggested, analyzing the one-dimensional slices and repairing them to make them compatible while preserving the tiling; this approach allows better control of the size of the periods obtained.

Our aim is therefore to locate a weakly-periodic tiling in $\Tile(F;\Z^2)$.
 Recall that for every tiling $A\in \Tile(F;\Z^2)$ Theorem \ref{thm:structure} gives a decomposition
 \begin{equation}\label{eq:Adecomp}
     \one_A=1-\sum_{v\in V} \varphi_v,
 \end{equation}
 where $V$ is a finite subset of primitive, linearly independent vectors $\Z^2\times \{0\}$, and for each $v\in V$, the function $\varphi_{v}: \Z^2 \to [0,1]$ is a $\langle qv\rangle$-periodic for sufficiently divisible $q$. This decomposition establishes a ``functional-weak-periodicity'' of $A$, presenting it as a finite sum of singly-periodic functions. In \cite[Theorem 3.4]{GT,GT25}, we showed that for every tiling $A\in \Tile(F;\Z^2)$ we can replace the functions $\varphi_v$, $v\in V$ in the decomposition \eqref{eq:Adecomp} by \emph{indicator functions} (supported on disjoint periodic sets), yielding a stronger structure theorem:

\begin{theorem}[Structure theorem in $\Z^2$]\label{thm:2structure}
    Let $F$ be a finite subset of $\Z^2$. There exist a finite set $V$ of primitive, mutually incommensurable vectors $\Z^2$, an integer $N\geq 1$ and a sufficiently divisible number $q$, such that for every $A\in \Tile(F;\Z^2)$ there is a decomposition 
    \begin{equation}\label{eq:2decomp}
        A = \bigsqcup_{v \in V} A_v,
    \end{equation} 
    where for each $v\in V$ the set $A_v$ is a $\langle qv\rangle$-periodic and  the set $(F\cap (x+\langle v\rangle)) \oplus A_v$ is $N\Z^2$-periodic for every $x\in \Z^2$.
\end{theorem}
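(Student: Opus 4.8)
The plan is to upgrade the ``functional'' decomposition \eqref{eq:Adecomp} provided by Theorem \ref{thm:structure} (specialized to $d=2$, $G_0$ trivial, so $|S_F|=|F|$ and $S_F=F$) into a decomposition by genuine indicator functions of disjoint periodic sets. Fix $A\in\Tile(F;\Z^2)$. By Theorem \ref{thm:structure}(ii) we have $\one_A = 1 - \sum_{v\in V}\varphi_v$ with each $\varphi_v\colon\Z^2\to[0,1]$ being $\langle qv\rangle$-periodic, each $\varphi_v$ periodic, and $\sum_{v\in V}\varphi_v = |F|-1$; moreover $\varphi_v = \sum_{f\in F}\varphi_{f,v}$ where $\varphi_{f,v}=\one_{F_f}*\one_A=\one_{f}*\one_A$ (again using $G_0$ trivial, so $F_f=\{f\}$) is $\langle qv\rangle$-periodic. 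The first step is to observe that $\one_A = 1-\sum_{v}\varphi_v = \sum_v(c_v - \varphi_v)$ where $\sum_v c_v = 1$ for appropriate constants; more usefully, since each $\varphi_v$ takes values in $[0,1]$ and $\sum_v\varphi_v\equiv|V'|-1$ for the relevant index count, one wants to peel off, for each $v$, an honest $\{0,1\}$-valued $\langle qv\rangle$-periodic function $\one_{A_v}$ with $\one_A = \sum_v \one_{A_v}$ and the $A_v$ pairwise disjoint. I would get this by a rounding/greedy argument along the direction structure: work one direction at a time, using that $\one_A$ is an indicator and the $\varphi_v$ are constant along $\langle qv\rangle$.

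The cleanest route, which I would pursue first, is to use the slicing machinery of \cite[Section 5]{GT} together with Lemma \ref{lem:weaktoptc}'s ingredients: restrict attention to a single line $\ell = x+\langle v\rangle$ for $v\in V$ and its translates. Along such a line the tile $F$ induces a one-dimensional slice $F\cap(x+\langle v\rangle)$, and the one-dimensional periodicity theory (Newman \cite{N}, as refined in \cite[Corollary 3.5]{GT}) forces any tiling of the line by this slice to be periodic with a universal period dividing some $q$. The key structural input is that, because $V$ consists of mutually incommensurable primitive vectors and the $\varphi_v$ are each constant in their own direction, the decomposition \eqref{eq:Adecomp} can be ``integrated'': define $A_v$ to be the set of points $x\in A$ whose membership is ``explained'' by the $v$-slice, i.e.\ one allocates each point of $A$ to exactly one direction $v\in V$ so that the resulting $A_v$ is $\langle qv\rangle$-periodic. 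Concretely, I would induct on $|V|$: pick $v_0\in V$; the function $1-\varphi_{v_0}$ is $\langle qv_0\rangle$-periodic and dominates $\one_A$ on the region where $\sum_{v\neq v_0}\varphi_v = |V'|-1$ (i.e.\ where the other $\varphi_v$'s are ``maximal''); on that region set $A_{v_0}$ equal to $A$, and on the complement recurse with the remaining directions, after checking that the restricted object is again a tiling-type decomposition with one fewer direction. That the last clause — $(F\cap(x+\langle v\rangle))\oplus A_v$ is $N\Z^2$-periodic — holds then follows because $A_v$ is $\langle qv\rangle$-periodic and its $v$-slices tile the line by the $v$-slice of $F$, so by one-dimensional periodicity the slice-tiling is periodic with a bounded period, uniformly in $x$; choosing $N$ to be a common multiple of $q$ and all these one-dimensional periods finishes it.

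\textbf{Main obstacle.} The delicate point is the allocation step: making the assignment of points of $A$ to directions $v$ well-defined, disjoint, \emph{and} $\langle qv\rangle$-periodic simultaneously. The functions $\varphi_v$ overlap in their supports in complicated ways, so a naive greedy rounding need not be periodic in the right direction, and need not respect the constraint $\sum_v\varphi_v=|V'|-1$ after rounding. The way I expect this to be handled — following \cite[Theorem 3.4]{GT} — is to exploit that on the ``generic'' fibers (where the incommensurable periodic functions $\varphi_v$ achieve a prescribed combination of values, a positive-density set of fibers by equidistribution of the relevant linear forms) the decomposition rigidifies, pin down $A_v$ there, and then use the dilation lemma / translation-invariance of $\Tile(F;\Z^2)$ once more to propagate the definition to all fibers; the compatibility across directions comes from the fact that any two of the $\langle qv\rangle$'s intersect only in $\{0\}$, so periodicity constraints in different directions are ``independent'' and can be imposed together. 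A secondary, more bookkeeping-type difficulty is extracting the uniform integer $N$: one must check that the one-dimensional periods of the slice-tilings $(F\cap(x+\langle v\rangle))\oplus A_v$ are bounded independently of $x$ and $v$, which is where the quantitative one-dimensional bound $|F|\diam(F)^{|F|-1}$ from \cite[Corollary 3.5]{GT} enters.
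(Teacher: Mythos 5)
Your high-level goal is right — upgrade the functional decomposition $\one_A = 1-\sum_{v\in V}\varphi_v$ from Theorem \ref{thm:structure} into a genuine disjoint union of periodic indicator sets — and you correctly flag the allocation of points of $A$ to directions as the crux. But the mechanisms you actually propose (a rounding/greedy argument, an induction on $|V|$ that restricts to a region where the other $\varphi_v$'s are ``maximal,'' a rigidification on ``generic fibers'') do not close that gap, and your ``main obstacle'' paragraph essentially concedes this without supplying a working replacement. There is no reason a greedy rounding of the $[0,1]$-valued functions $\varphi_v$ should yield $\langle qv\rangle$-periodic $\{0,1\}$-valued sets, nor that the regions you isolate have any periodic structure; incommensurability of the periods alone does not give you independence of the constraints.

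The paper resolves exactly this obstacle through a chain of ideas absent from your sketch. Rather than rounding, one iteratively discretely differentiates $\one_A = 1-\sum_v\varphi_v$ along (all but one of) the period directions $qv$; since each $\varphi_v$ is annihilated by $\partial_{qv}$ and $\one_A$ is $\{0,1\}$-valued, this yields $\partial_e^{|V|-1}\varphi_v \equiv 0 \Mod 1$ for a suitable vector $e$ incommensurable with every $v\in V$, i.e.\ $n\mapsto\varphi_v(ne+x)\Mod 1$ is a \emph{polynomial}. Weyl's theory then gives the dichotomy: each such polynomial is either periodic mod $1$ or equidistributed mod $1$. The decisive step — which your proposal has backwards — is that equidistribution must be \emph{ruled out}, not invoked: because $\sum_v\varphi_v = 1-\one_A$ is $\{0,1\}$-valued and the tiling is level one, there is not enough room for even one equidistributed term (it would have to be cancelled by another, impossible by a density count at level one; see \cite[Proposition 4.1(iii)]{GT}). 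Only after concluding that all the $\varphi_v$'s are periodic does one obtain, for each coset of a suitable lattice $N''\Z^2$, that $\one_A$ restricted there is either periodic or equal to $1-\varphi_v$ for a single $v$; that is what produces the decomposition $A=\bigsqcup_{v}A_v$. Finally, the clause that $(F\cap(x+\langle v\rangle))\oplus A_v$ is $N\Z^2$-periodic does not come from the one-dimensional slice bound you cite: periodicity along $v$ is automatic from $\langle qv\rangle$-periodicity of $A_v$, and the transverse periodicity is obtained by a further application of the dilation lemma (Lemma \ref{lem:dilation}) to the tiling $F\oplus A_v=E_v$, where $E_v$ is the periodic set supplied by Theorem \ref{thm:structure}(ii). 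In short, the missing idea is the differentiation-to-polynomial step together with the elimination of equidistribution; without it there is no mechanism forcing the $\varphi_v$'s to be periodic, and hence no way to make the $A_v$'s simultaneously disjoint and $\langle qv\rangle$-periodic.
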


This theorem follows from \cite[Theorem 1.4]{GT}. Given a finite $F\subset \Z^2$, let $V$ be as in Theorem \ref{thm:structure} and suppose $A\in \Tile(F;\Z^2)$. The proof in \cite[Section 4]{GT} consists of the following main steps: 
\begin{itemize}
\item \emph{Step 1.}  Showing that there exist  $N'\in \N$ and a vector $e\in N'\Z^2$ that is incommensurable with each of the vectors $v\in V$, such that for every $x\in \Z^2$ and $v\in V$ the one-dimensional function 
\begin{equation}\label{eq:poly}
    n \in \Z \mapsto \varphi_v (n e+x) \Mod 1
\end{equation} 
is a polynomial. The argument relies on iterated discrete differentiation of \eqref{eq:Adecomp} in the directions of (all but one of) the periods, which gives that the derivative of order $(|V|-1)$ of $\varphi_v$ along a suitable vector $e$, $\partial_e^{|V|-1} \varphi_v$ vanishes modulo $1$ for every $v\in V$-- characterizing polynomial behavior.

\item \emph{Step 2.} By classical Weyl equidistribution theory, each of the functions in \eqref{eq:poly} must be either periodic or equidistributed. Thus, this dichotomy exhausts all possible structural behaviors of the functions $\varphi_v$, $v\in V$, when restricted to each coset of $N'\Z^2$.

\item \emph{Step 3.} By \eqref{eq:Adecomp} equidistribution scenarios cannot occur for level-one tilings. Roughly speaking, there is simply not enough ``room'' for more than one of the functions $\varphi_v$, $v \in V$ to equidistribute (see \cite[Proposition 4.1(iii)]{GT}). On the other hand, since the sum of these functions is ${0,1}$-valued, the existence of a single equidistributed function would force it to be canceled by other equidistributed functions, leading to a contradiction.
Consequently, all of these polynomials must, in fact, be periodic.

\item \emph{Step 4.} We conclude that there exists some $N''$ divisible by $N'$ such that, for each coset $\Gamma$ of the lattice $N''\Z^2$, the restriction $\one_{A\cap \Gamma}=\one_A\one_\Gamma$ of $\one_A$ to $\Gamma$ falls into one of two cases: either it is periodic, or it takes the form $1-\varphi_v$ for some $v\in V$. In the latter case, it is $\langle qv\rangle$-periodic for sufficiently divisible $q$, which yields the desired decomposition \eqref{eq:2decomp}.

\item \emph{Step 5.} Finally, a further application of the dilation lemma \ref{lem:dilation} to the tiling $F\oplus A_v=E_v$ (where the set $E_v$ is periodic by Theorem \ref{thm:structure}(ii)) gives the $N\Z^2$-periodicity of the set $(F\cap (x+\langle v\rangle))\oplus A_v$ for every $x\in \Z^2$ and some $N$ divisible by $N''$. See \cite[Lemma 5.1]{GT} or \cite[Theorem 3.4(ii)]{GT25} for details.
\end{itemize}

Theorem \ref{thm:2structure} implies that \emph{every} tiling $A\in \Tile(F;\Z^2)$ is weakly-periodic \cite[Theorem 1.3(i)]{GT}. In turn, by Lemma \ref{lem:weaktoptc}, we conclude that Conjecture \ref{con:ptc} holds in $\Z^2$, with a universal bound on the period \cite[Theorem 1.5]{GT}. As a result, we can conclude the decidability of translational tilings in $\Z^2$, with a bound on the computational complexity of the problem \cite[Corollary 1.6]{GT}.

\begin{remark}\label{rem:highdim}
  The argument in the proof of Theorem \ref{thm:2structure} does not extend to $\Z^d$ for $d\geq3$ since, generally, the functions we get by applying the discrete derivative machinery (Step 1 above) are $d-1$ dimensional. In the two-dimensional case, they are therefore one-dimensional, as described in \eqref{eq:poly}, and hence we can deduce that they are polynomials from the fact that their discrete derivative (to some order) vanishes. When $d\geq 3$, these functions are of dimension greater than one, and so although their (partial) discrete derivative in $d-1$ directions (to some order) vanishes, this no longer implies that they are polynomials. For instance, for any $h,r\colon \Z\to \R$ the map
  $$(n,m)\in \Z^2\mapsto h(n)+r(m)$$
  vanishes under $\partial_{(0,1)}\partial_{(1,0)}$, with no restrictions on the functions $h,r$; in particular, they need not be polynomials.
  In turn, the Weyl dichotomy of periodicity versus equidistribution may not cover all possible structural behaviors of these functions.
Thus, new methods are required to understand the full range of possible structures of the solutions in three dimensions and beyond. In particular, to date, the following question remains open.  \begin{question}\label{q:Z3} Does Conjecture \ref{con:ptc} hold in $\Z^3$? \end{question}
\end{remark}

\subsection{Periodicity of soft tilings in $\Z^2$.}

In \cite{KL}, a natural generalization of the study of the structure of tilings to, so-called, \emph{soft tilings} was suggested, where the tile set is replaced with a compactly supported function and the set being tiled is replaced with a periodic function. 

\begin{definition}[Discrete soft tiling in $\Z^2$]
Let $f\colon \Z^2\to \Z$ be compactly supported, and let $g\colon \Z^2\to \Z$ be periodic. A soft tiling of $g$ by $f$ is a set $A\subset \Z^2$ such that $f *\one_A=g$.
\end{definition}

A special case of soft tilings is \emph{higher level tilings}, also known as \emph{multi tilings}. In this case, the function being tiled is a constant $k>1$. In particular, a finite set $F \subset \Z^2$ \emph{tiles $\Z^2$ at level $k$} along a tiling set $A$ if $\one_F *\one_A =k$, i.e., every point in $\Z^2$ belongs to exactly $k$ elements of the family $\{F +a\}_{a\in A}$. (See. e.g., \cite{Bolle,Gravin0,Gravin,Kolountzakis2000,Liu2021,Robins}, and the references therein for various aspects of the study of multi tilings.)

\begin{example}[Multi tiling]
     Let $F=\{0,2,3\}\times \{0,1\}$. Then clearly $\Tile(F;\Z^2)$ is empty. However, for $A=\Z\times 2\Z$ we have $\one_F * \one_A=3$, which means that $F$ tiles $\Z^2$ at level three along $A$. See \cite{Robins} for more multi tiling examples. Also, note that trivially every finite set $F\subset \Z^2$ multi tiles $\Z^2$ at level $|F|$ by translations along $A=\Z^2$.
\end{example}

In \cite[Theorem 1.3(ii)]{GT} we showed that Theorem \ref{thm:2structure} fails for multi tilings in $\Z^2$. Indeed, in this case, the higher level leaves enough ``room'' for equidistribution scenarios to occur in the tiling, resulting in a non-structured behavior. In particular, multi tilings in $\Z^2$ do not need to be weakly-periodic, and thus we cannot use the machinery of the proof of Lemma \ref{lem:weaktoptc} to conclude the existence of a periodic solution.

\begin{example}[Equidistributed multi tiling]\label{ex:multi}
Let $$F=\left\{\epsilon_1(0,2)+\epsilon_2(1,0)+\epsilon_3(2,-2)\colon \epsilon_1,\epsilon_2,\epsilon_3\in \{0,1\}\right\}$$ (see Figure \ref{fig:multi}),
and for every $\alpha\in \R$ let 
$$A_\alpha \coloneqq \left\{(n,m)\colon (-1)^{\lfloor m/2\rfloor +n} \left(\{\alpha n\}+\{\alpha m\}-\{\alpha(n+m)\}-1/2\right) > 0\right\}.$$
 Then we have $\one_F *\one_{A_\alpha}=4$, which means that $F$ tiles $\Z^2$ at level four along $A_\alpha$. Observe that when $\alpha$ is irrational, the set $A_\alpha$ is not weakly-periodic (see \cite[Section 2]{GT} for details).\footnote{See also \cite[Example 3.4.6]{sz} for a related \emph{Bohr set}-type construction in the context of Nivat's conjecture on the structure of low complexity configurations in $\Z^2$.} Nevertheless, in this case $F$ does admit a \emph{periodic} level four tiling as for every rational $\alpha$ the tiling set $A_\alpha$ is periodic. 
\end{example} 

\begin{figure}[ht]
    \centering
    \includegraphics[width=2.5cm]{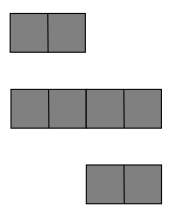}
    \caption{An illustration of the set $F$ in Example \ref{ex:multi} (where a point $x$ in $\Z^2$ is represented by $x+[0,1]^2$).}
    \label{fig:multi}
\end{figure}

In our recent work \cite[Theorem 1.9]{GT25} we finally establish the periodic tiling conjecture for soft tilings in $\Z^2$. As a result, we also deduce the decidability of soft (and multi) tilings in $\Z^2$ \cite[Corollary 1.10]{GT25}.

The proof of \cite[Theorem 1.9]{GT25} is quite subtle; it consists of a fine analysis of the structure of soft tilings, using a suitable extension of Theorem \ref{thm:structure} to integer tilings.
We first apply ergodic and probabilistic methods (ergodic limits and first and second moments) to clean up any given solution $\one_A$, transforming it into a normal form \cite[Proposition 8.7]{GT25} that is more convenient to analyze, as it provides a complete separation between the structured and unstructured components of the tiling. Then, using equidistribution theory together with a sequence of reductions to the description of the solution space (up to null sets) using our structure theory for soft tilings, we managed to describe a normal solution as a finite system of first-order sentences in the language of linear inequalities over the rationals (in the reals). Since this system admits a real solution, it must also admit a rational solution.
By rationalizing the possible equidistribution scenarios, we gain periodicity.

\section{Discovery of wild translational monotilings}\label{sec:high}
In the previous section, we saw that in $\Z$, virtually-$\Z$ and also in $\Z^2$, Theorem \ref{thm:structure} is strong enough to imply the periodic tiling conjecture (Conjecture \ref{con:ptc}). But is it strong enough to infer the periodic tiling conjecture in higher dimension? In this section, we discover not only that the answer is no, but also that, in fact, the periodic tiling conjecture is false even when restricted to virtually-$\Z^2$ spaces. We describe the behind-the-scenes process (which has not been previously published) that ultimately led to the construction of the counterexample in \cite{GT22}.

\subsection{Reversible structure theorem in virtually-$\Z^2$ spaces.}
In Section \ref{sec:low} we saw that in $\Z^2$, the functions on the right hand side of \eqref{eq:decomp} can be replaced with indicator functions (Theorem \ref{thm:2structure}). As noted in Remark \ref{rem:highdim}, the proof of this theorem does not extend to $\Z^d$ for $d\geq 3$. However, in virtually-$\Z^2$ spaces, applying the machinery in the proof of Theorem \ref{thm:2structure} to the decomposition \eqref{eq:decomp} of \emph{slices} $\one_{F_f}*\one_A$ of the tiling (rather than the tiling $\one_A$ itself), we obtain a reversible structure theorem, where the right hand side of \eqref{eq:decomp} is replaced by indicator functions. 

\begin{theorem}[Reversible structure theorem in $G=\Z^2\times G_0$]\label{thm:vir2structure}
    Let $G_0$ be a finite Abelian group and $F$ be a finite subset of $G=\Z^2\times G_0$.  Then there exist a sufficiently divisible $q$, a lattice $N\Z^2$ and  a finite set  $V$ of primitive, nonzero, mutually incommensurable vectors in $\Z^2$ such that for every $A\subset G$ the following are equivalent:
\begin{enumerate}
    \item[(i)]  $F \oplus A = G$.  
    \item[(ii)] For every $f\in S_F$ there is a decomposition 
    \begin{equation}\label{eq:vir2decomp}
        \one_{F_f} * \one_A = \sum_{v \in V} \one_{E_{f,v}},
    \end{equation}  where for each $v\in V$, the set $E_{f,v}$ is a $\langle qv\rangle$-periodic (viewing $\Z^2$ as a subgroup of $G$). Moreover,  the sets $E_{f,v}$ are disjoint and for every $v\in V$ the set $E_v\coloneqq \bigsqcup_{f\in S_F} E_{f,v}$ is $N\Z^2$-periodic and $\bigsqcup_{v\in V} E_v = G$.
\end{enumerate}  
\end{theorem}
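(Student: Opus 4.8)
The plan is to deduce this theorem from the already-established Reversible structure theorem (Theorem~\ref{thm:structure}) together with the techniques from the proof of the structure theorem in $\Z^2$ (Theorem~\ref{thm:2structure}), the only essential difference being that here we must run those techniques on each \emph{slice function} $\one_{F_f}*\one_A$ rather than on $\one_A$ itself. The implication (ii)$\Rightarrow$(i) will be the easy direction: if we have a decomposition \eqref{eq:vir2decomp} into disjoint $\langle qv\rangle$-periodic sets whose $f$-sums $E_v$ are $N\Z^2$-periodic and partition $G$, then summing \eqref{eq:vir2decomp} over $f\in S_F$ gives $\one_F*\one_A=\sum_{f\in S_F}\one_{F_f}*\one_A=\sum_{v\in V}\one_{E_v}=\one_{\bigsqcup_v E_v}=\one_G=1$, which is exactly $F\oplus A=G$. (One should remark that disjointness of the $E_{f,v}$ is needed so that $\one_{F_f}*\one_A$, an integer-valued function bounded by $|F_f|\le|G_0|$, is correctly recovered as a sum of indicators.)

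For the main implication (i)$\Rightarrow$(ii), I would proceed as follows. Fix $A$ with $F\oplus A=G$ and apply Theorem~\ref{thm:structure} to obtain, for each $f\in S_F$, the decomposition \eqref{eq:decomp}, $\one_{F_f}*\one_A = 1 - \sum_{v\in V}\varphi_{f,v}$, with each $\varphi_{f,v}\colon G\to[0,1]$ being $\langle qv\rangle$-periodic, $\varphi_v=\sum_{f}\varphi_{f,v}$ periodic, and $\sum_v\varphi_v=|S_F|-1$. The slice function $\one_{F_f}*\one_A$ is integer-valued (it counts translates of $F_f$ covering a point), so modulo~$1$ the identity reads $\sum_{v\in V}\varphi_{f,v}\equiv 0\ (\mathrm{mod}\ 1)$. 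Now I mimic Steps~1--3 of the proof of Theorem~\ref{thm:2structure}: choosing a vector $e\in N'\Z^2$ incommensurable with all $v\in V$ and iterating discrete differentiation $\partial_e$ in the direction of all-but-one of the $\langle qv\rangle$-periods, one finds that $\partial_e^{|V|-1}\varphi_{f,v}$ vanishes modulo~$1$, so that each one-dimensional restriction $n\mapsto\varphi_{f,v}(ne+x)\ (\mathrm{mod}\ 1)$ is a polynomial; Weyl's dichotomy then forces each such polynomial to be either periodic or equidistributed. The crucial input that kills equidistribution is the level-one constraint: since $\one_{F_f}*\one_A$ takes values in $\{0,1,\dots,|F_f|\}$ and, after passing to a suitable fine sublattice $N''\Z^2$, on each coset $\Gamma$ the sum $1-\sum_v\varphi_{f,v}$ must stay integer-valued while the $\varphi_{f,v}$ are each constrained to $[0,1]$, there is no room for a nontrivial equidistributed component (an equidistributed term would have to be cancelled by another equidistributed term, as in \cite[Proposition 4.1(iii)]{GT}, contradicting the joint constraint). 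Hence on each coset of $N''\Z^2$ each $\varphi_{f,v}$ is periodic, and combining this periodicity with the original $\langle qv\rangle$-periodicity upgrades each $\varphi_{f,v}$ (after enlarging $q$ and $N$) to an indicator function $\one_{E_{f,v}}$ of a $\langle qv\rangle$-periodic set; rewriting $1-\sum_v\one_{E_{f,v}}$ as $\sum_v\one_{E'_{f,v}}$ with disjoint $E'_{f,v}$ (relabelled $E_{f,v}$) and reading off $E_v=\bigsqcup_f E_{f,v}$, the identity $\sum_v\varphi_v=|S_F|-1$ together with $\sum_f\one_{F_f}*\one_A=1$ yields $\bigsqcup_v E_v=G$, while the periodicity of $\varphi_v$ gives the $N\Z^2$-periodicity of $E_v$. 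A final application of the dilation lemma (Lemma~\ref{lem:dilation}), exactly as in Step~5 of Theorem~\ref{thm:2structure}, is what secures the $N\Z^2$-periodicity uniformly across slices $F\cap(x+\langle v\rangle)$.

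The main obstacle — and the place where the virtually-$\Z^2$ setting genuinely differs from both the pure $\Z^2$ case and the $\Z^d$, $d\ge3$, case — is verifying that the equidistribution-killing argument of Step~3 survives when run on the slice functions $\one_{F_f}*\one_A$ rather than on $\one_A$. The point of Remark~\ref{rem:highdim} is that in $\Z^d$ with $d\ge 3$ the functions produced by the discrete-derivative machinery are genuinely $(d-1)$-dimensional and need not be polynomial, so Weyl's dichotomy fails to be exhaustive; here, however, because each $\varphi_{f,v}$ lives on $\Z^2\times G_0$ and is $\langle qv\rangle$-periodic in a \emph{one}-dimensional direction of the $\Z^2$ factor, the relevant restrictions are still one-dimensional and the argument goes through — but one must be careful that the presence of the finite group $G_0$ and the splitting of $\one_F*\one_A$ into several slices do not reintroduce extra room for an equidistributed term. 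Concretely, the delicate check is that the simultaneous constraints "$\one_{F_f}*\one_A\in\Z$ for every $f$" and "$\sum_{f,v}\varphi_{f,v}=|S_F|-1$" are rigid enough to force \emph{every} $\varphi_{f,v}$ (not merely the aggregate $\varphi_v$) to be non-equidistributed on each coset; this is the slice-by-slice refinement of \cite[Proposition 4.1]{GT} and is where the bulk of the work lies. Once that is in hand, the remaining bookkeeping — choosing $q$ sufficiently divisible, $N$ large enough, and disjointifying the $E_{f,v}$ — is routine.
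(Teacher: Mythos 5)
Your plan coincides with the paper's: the paper sketches the proof of Theorem~\ref{thm:vir2structure} precisely as ``applying the machinery in the proof of Theorem~\ref{thm:2structure} to the decomposition~\eqref{eq:decomp} of slices $\one_{F_f}*\one_A$ of the tiling (rather than the tiling $\one_A$ itself),'' and both your (ii)$\Rightarrow$(i) (sum over $f$ and use $\bigsqcup_v E_v=G$) and your (i)$\Rightarrow$(ii) (Theorem~\ref{thm:structure}, then discrete differentiation, Weyl dichotomy, level-one rigidity, and the dilation lemma slice-by-slice) match that outline. One point you overstate as a difficulty is actually the clean observation that makes the slice-by-slice transfer painless: you describe $\one_{F_f}*\one_A$ as ``bounded by $|F_f|$'' and worry that the splitting into slices ``might reintroduce extra room for an equidistributed term.'' In fact, under hypothesis (i) the \emph{packing} half of $F\oplus A=G$ already forces each $\one_{F_f}*\one_A$ to be $\{0,1\}$-valued -- for each $x\in G$ there is a unique pair $(f',a)\in F\times A$ with $f'+a=x$, and since $F_f\subset F$ this uniqueness persists when the count is restricted to $F_f\times A$. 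So each slice is itself a level-one cover of a (periodic) set, and the ``not enough room to equidistribute'' argument of Step~3 applies to each slice verbatim; the bulk of the work you anticipate in a ``slice-by-slice refinement of \cite[Proposition 4.1]{GT}'' is not actually needed. With that clarification, your argument is the same route as the paper's.
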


We then used this reversible statement to reduce Conjecture \ref{con:ptc} in virtually-$\Z^2$ spaces to a new conjecture on finite colorings of the integers, which only involves understanding the properties of one-dimensional sets, rather than two-dimensional sets, and is therefore more tractable, as described below.

\subsection{Periodic coloring conjecture.}
Given an integer $N \geq 1$, a finite set $V$ of primitive, nonzero, mutually incommensurable vectors in $\Z^2$, 
a finite (color) set $\Sigma$ and a subset $\Omega$ of $ \Sigma^{|V|}\times (\Z/N\Z)^2$, we set  $\overline{V}\coloneqq\{\bar{v}\colon v\in V\}$, where $\bar{v}$ is a fixed primitive vector orthogonal to $v$ and define the set $\Solution(V, N, \Sigma, \Omega )$
to be the collection of all tuples of functions $C_{v}: \Z \to \Sigma$, $v \in V$ such that there is a two-dimensional \emph{clock} $\sigma\colon (\Z/N\Z)^2\to (\Z/N\Z)^2$ (in each coordinate, $\sigma$ is increasing by one every time the input advances by one \cite[Example 4.8]{GT22}), for which
	\begin{equation}\label{inomega}
	    ((C_{v}(x \cdot \bar{v}))_{v \in V},\sigma(x \Mod{N\Z^2}))  \in \Omega
	\end{equation}
	for every $x \in \Z^2$.  We say that this solution set $\Solution(V, N, \Sigma, \Omega )$ is \emph{aperiodic} if it is nonempty, but does \emph{not} contain any tuples $(C_{v})_{v \in V}$ with \emph{all} of the functions $C_{v}\colon \Z\to \Sigma$ being periodic.

	\begin{example}  Let $M$ be a natural number.  Consider the set of all triples $(r,g,h)$ of functions $r,g,h \colon \Z \to \Z/M\Z$ obeying the relation
	$$ h(n+m) \in r(n) + g(m) +  (\{0,1\}+M\Z)$$
	for all $n,m \in \Z$. This set is of the form $ \Solution( V, N, \Sigma, \Omega )$, where $\overline{V} = \{(1,0),(0,1),(1,1)\}$, $N=1$, $\Sigma = \Z/M\Z$, and 
	$$ \Omega := \{ (a,b,c) \in (\Z/M\Z)^3\colon  c \in a+b+\{0\Mod M,1 \Mod M\} \}\times \{0\}.$$
	For any real number $\alpha$, one can create an element $(r,g,h)$ of this set by taking $r(j) = g(j) = h(j) = \lfloor \alpha j \rfloor \Mod M$ for all $j \in \Z$.  If $\alpha$ is rational, this is periodic.  In particular, in this case $ \Solution( V, N, \Sigma, \Omega )$ is not aperiodic.
	\end{example}
	
	One can show that the periodic tiling conjecture \ref{con:ptc} holds in virtually-$\Z^2$ spaces if and only if the following alternative periodic coloring conjecture holds:
	
	\begin{conjecture}[Periodic coloring conjecture]\label{con:altptc}  
    For any $V, N, \Sigma, \Omega$ as above, the solution set $\Solution(V, N, \Sigma, \Omega)$ is not aperiodic.
	\end{conjecture}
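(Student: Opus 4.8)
The plan is to prove the stated equivalence by verifying its two contrapositives: an \emph{aperiodic} tiling equation $F\oplus X=G$ exists in some virtually-$\Z^2$ space $G=\Z^2\times G_0$ if and only if an \emph{aperiodic} coloring instance $\Solution(V,N,\Sigma,\Omega)$ exists. Both implications will follow from a ``faithful dictionary'' between tiling equations in virtually-$\Z^2$ spaces and coloring instances --- a translation preserving both nonemptiness of the solution space and the existence of a periodic solution --- after which the equivalence of \conjref{con:ptc} (restricted to virtually-$\Z^2$ spaces) and \conjref{con:altptc} is immediate.

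\emph{From tilings to colorings} (giving \conjref{con:altptc} $\Rightarrow$ \conjref{con:ptc} in virtually-$\Z^2$ spaces). Given $F\subset G=\Z^2\times G_0$, I apply \thmref{thm:vir2structure} to obtain $q$, the lattice $N\Z^2$ (enlarged so that $q\mid N$) and the vectors $V$, and put $\overline V=\{\bar v\colon v\in V\}$ as in \conjref{con:altptc}. For $v\in V$ the line $\{x\colon x\cdot\bar v=t\}$ is a coset of $\langle v\rangle$, and I let $C_v(t)$ record the $\langle qv\rangle$-periodic pattern of the tuple $(E_{f,v})_{f\in S_F}$ along it (a bounded amount of data, as $G_0$ and the period are finite); since the pair $(t,x\bmod N\Z^2)$ determines $x\bmod\langle qv\rangle$, the clock value $\sigma(x\bmod N\Z^2)$ together with $C_v(x\cdot\bar v)$ reveals the membership of $x$ in each $E_{f,v}$. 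I take $\Sigma$ large enough to also carry, along each line, the data of the finitely many neighbouring lines indexed by $t+f\cdot\bar v$ ($f\in S_F$), and I define $\Omega$ to encode exactly the conditions of \thmref{thm:vir2structure}(ii) --- pairwise disjointness of the $E_{f,v}$, $N\Z^2$-periodicity of each $E_v=\bigsqcup_f E_{f,v}$ (read off via $\sigma$), and $\bigsqcup_v E_v=G$ --- \emph{together with} the solvability, at the point in question, of the local ``deconvolution'' problem of finding a tiling $A$ with the prescribed slices $\one_{F_f}*\one_A$. This last clause makes sense as a point-local condition because the equation $\one_F*\one_A=1$ decouples over the $\Z^2$-coordinate of $\one_A$, and it is exactly what certifies that some $A\in\Tile(F;G)$ realises the given $E$-structure. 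The ``only if'' half of \thmref{thm:vir2structure} then sends every $A\in\Tile(F;G)$ to an element of $\Solution(V,N,\Sigma,\Omega)$; the decoupling observation sends every element of $\Solution(V,N,\Sigma,\Omega)$ back to some $A\in\Tile(F;G)$; and both passages respect periodicity (a periodic tiling has periodic $E$-data, and a periodic coloring forces each $E_{f,v}$ to be fully periodic, whence the periodically varying local systems admit a periodic selection). Hence the equation $F\oplus X=G$ is aperiodic if and only if $\Solution(V,N,\Sigma,\Omega)$ is aperiodic.

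\emph{From colorings to tilings} (giving \conjref{con:ptc} in virtually-$\Z^2$ spaces $\Rightarrow$ \conjref{con:altptc}). Given an instance $(V,N,\Sigma,\Omega)$, I would realise it as a \emph{single} tiling equation $F'\oplus X=\Z^2\times G_0'$ over a suitable finite Abelian group $G_0'$, using the tiling-language machinery of \cite{GT2,GT22} (see \subsecref{subsec:language}): one block of sub-coordinates forces, for each $v\in V$, a $\Sigma$-valued function on $\Z^2$ that is constant in the direction $v$ --- a copy of some $C_v\colon\Z\to\Sigma$; a ``clock'' block reproduces $\sigma$ via the standard periodic gadget of \cite[Example 4.8]{GT22}; and a final gadget imposes $((C_v(x\cdot\bar v))_{v},\sigma(x\bmod N\Z^2))\in\Omega$ at every $x\in\Z^2$. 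Assembling these finitely many constraints into one monotile equation --- keeping the number of tiles equal to one and the $\Z^2$-rank fixed, i.e.\ staying inside a virtually-$\Z^2$ space --- yields $F'$ with $\Tile(F';\Z^2\times G_0')$ in periodicity-preserving bijection with $\Solution(V,N,\Sigma,\Omega)$; thus an aperiodic instance yields an aperiodic monotile in a virtually-$\Z^2$ space.

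\emph{The main obstacle.} Essentially all the work is in making the two dictionaries tight. On the tilings-to-colorings side the tiling-to-coloring map is not injective and a coloring does not literally recover a tiling, so one must verify that the deconvolvability clause, the $N\Z^2$-periodicity bookkeeping, and the consistency of the overlapping line-data carried by $\Sigma$ can \emph{all} be expressed as the single point-local relation $\Omega$ of the required form \eqref{inomega} --- with $\sigma$ absorbing the residue modulo $N\Z^2$ and a sufficiently divisible $q$ resolving the $\langle qv\rangle$-fibres --- and that periodicity transfers in both directions; this is exactly the point of the \emph{reversible} formulation of \thmref{thm:vir2structure}. On the colorings-to-tilings side the delicate step is to realise an arbitrary instance as a \emph{single} tiling equation with a \emph{single} tile in a virtually-$\Z^2$ space, rather than paying the extra dimension that the general system-to-single-equation reduction \cite[Theorem 1.15]{GT2} would cost, and this draws on the full apparatus of \cite{GT2} and \cite{GT22}.
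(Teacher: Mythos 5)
Your proposal follows essentially the same route the paper sketches: the tilings-to-colorings direction via the reversible structure theorem (Theorem~\ref{thm:vir2structure}) and a slicing analysis, with $\Sigma$ encoding the $G_0$-fiber data and the $C_v$ read off from the $\langle qv\rangle$-periodic components (the paper phrases this via the projections $\pi_{qv}$ of $\one_A$ rather than your $E_{f,v}$-patterns, but it is the same information), and the colorings-to-tilings direction via the tiling language of \cite{GT2}, using that a constraint of the form \eqref{inomega} is expressible as a single monotile equation in a virtually-$\Z^2$ space (Remark~\ref{rem:obstacle}). You spell out a ``deconvolvability'' clause in $\Omega$ that the paper's sketch leaves implicit in the reversibility of Theorem~\ref{thm:vir2structure}, but this is a matter of bookkeeping rather than a different idea.
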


Using Theorem \ref{thm:vir2structure} together with a slicing analysis (as in \cite[Section 5]{GT} and \cite[Sections 3, 9]{GT25}), we can describe any tiling instance $\Tile(F;\Z^2\times G_0)$ of Conjecture \ref{con:ptc} as a coloring instance $\Solution(V,N,\Sigma,\Omega)$ of Conjecture \ref{con:altptc}. 
Roughly speaking, $V$ is the same set as in Theorem \ref{thm:vir2structure}, $\Sigma$ is a suitable subset of $\{0,1\}^{G_0}$ and the coloring functions $C_v$, $v\in V$ are determined by the projections $\pi_{qv}$, $v\in V$ of a tiling $A\in \Tile(F;\Z^2\times G_0)$. For each $x\in \Z^2$, the behavior of $A$ in the slice $\{x\}\times G_0$ corresponds to the point $(C_{v}(x \cdot \bar{v}))_{v \in V}$ of $(C_v)_{v\in V} \in \Solution(V,N,\Sigma,\Omega)$ (and $\Omega$ is defined to reflect this correspondence). 
In particular, $\Tile(F;\Z^2\times G_0)$ is aperiodic if and only if $\Solution(V,N,\Sigma,\Omega)$ is aperiodic.

Conversely, the tiling language of \cite{GT2} (see Section~\ref{subsec:language}) allows us to express any coloring instance $\Solution(V,N,\Sigma,\Omega)$ of Conjecture \ref{con:ptc} as a monotiling equation \begin{equation}\label{eq:eq}
    F\oplus X=\Z^2\times G_0.
\end{equation} 
The key here is that the definition of an element of $\Solution (V,N,\Sigma,\Omega)$ consists of the sentence \eqref{inomega}, which, as mentioned in Remark \ref{rem:obstacle}, is expressible as a monotiling equation. 
This encoding preserves aperiodicity, demonstrating that the set $\Solution(V,N,\Sigma,\Omega)$ is aperiodic if and only if the corresponding tiling equation \eqref{eq:eq} is aperiodic  \cite{GT22}.

The remainder of the section is devoted to resolving Conjecture \ref{con:altptc}, elucidating the process through a discussion of failed attempts.

\subsection{Failed van der Waerden-type attempt.}

One can attempt to establish the periodic coloring conjecture \ref{con:altptc} (and, in turn, the periodic tiling conjecture \ref{con:ptc} in virtually-$\Z^2$ spaces)  using the following van der Waerden-type approach.

For simplicity, assume $\overline{V} = \{(1,0), (0,1), (1,1)\}$ (the argument extends to other choices of $V$). Then, an element in $\Solution( V, N, \Sigma, \Omega )$ consists of a triple of functions $r,g,h \colon \Z \to \Sigma$ obeying the constraint
\begin{equation}\label{rgh}
(r(n), g(m), h(n+m), \sigma((n,m) \Mod{N\Z^2})) \in \Omega
\end{equation}
for all $(n,m) \in \Z^2$, for some clock function $\sigma$. We then wish to locate periodic functions $\tilde r, \tilde g, \tilde h \colon \Z \to \Sigma$ obeying \eqref{rgh}. Let $(r,g,h)\in \Solution(V,N,\Sigma,\Omega)$.  Then, by the van der Waerden theorem \cite{vanderWaerden1927}, there exists an arithmetic progression $a, a+d, \dots, a+(2N-2)d$ of length $(2N-1)$ in $\Z$ such that
\begin{equation}\label{period}
 h(a+jd) = h(a+(N+j)d)
 \end{equation}
for all $j=0,\dots,N-2$.
Suppose we could find such a progression with the additional requirement $d=1\Mod N$. We could then define $\N\Z$-periodic functions $\tilde r, \tilde g, \tilde h$ as
$$ \tilde r( Ns + j ) \coloneqq r(a+jd) \quad \text{and} \quad \tilde g( Ns + j ) \coloneqq g(jd)$$
for every $s \in \Z$ and $j=0,\dots,N-1$, and 
$$ \tilde h( Ns + j ) \coloneqq h(a+jd)$$
for all $s \in \Z$ and $j=0,\dots,2N-2$ (which is well-defined thanks to \eqref{period}).
Then, by \eqref{rgh} and the construction of $(\tilde r,\tilde g,\tilde h)$, we would have
$$ (\tilde r(n), \tilde g(m), \tilde h(n+m), \sigma((n,m)\Mod {N\Z^2})) \in \Omega$$
for every $(n,m)\in \Z^2$ such that
\begin{equation}\label{nm}
    n = Ns_n + j_n,\quad m = Ns_m + j_m
\end{equation}
with $s_n,s_m\in \Z$ and $j_n,j_m =0,\dots,N-1$. 
But since all integers $n,m$ can be represented as \eqref{nm}, we would obtain that the functions $\tilde r, \tilde g, \tilde h$ obey \eqref{rgh} for all $(n,m)\in \Z^2$ and hence $\Solution(V,N,\Sigma, \Omega)$ contains the periodic solution $(\tilde r,\tilde g,\tilde h)$.

Therefore, if we could prove a strengthening of the van der Waerden theorem stating that for any finite coloring of the integers $h: \Z \to \Sigma$ and every $M,N\in \N$ there exists a length $MN$ arithmetic progression $a, a+d, \dots, a+MNd$ such that $d = 1 \Mod N$ and $h(a+jd) = h(a+(N+j)d)$ for all $0 \leq j \leq (M-1)N$, then we would establish Conjecture \ref{con:altptc} and, in turn, Conjecture \ref{con:ptc} in virtually-$\Z^2$ spaces.

However, it turns out that this strong van der Waerden-type statement fails to hold. For instance, let $p$ be an odd prime, $\Sigma_p\coloneqq (\Z/p\Z)^\times$ and $f_p\colon \Z\to \Sigma_p$ be the function 
\begin{equation}\label{eq:fp}
    f_p(n)\coloneqq\frac{n}{p^{\nu_p(n)}} \Mod p
\end{equation}
for every $n\in \Z\setminus \{0\}$, where $\nu_p(n)$ is the number of times $p$ divides $n$, and we set $f_p(0)\coloneqq 1\Mod p$. Then $f_p$ is a counterexample to this van der Waerden-type statement, since for every $N\in \N$,  $a\in \Z$, and integer $d$ with  $d = 1 \Mod N$, there exists $0 \leq j \leq p^2 N$ such that 
$$f_p(a+jd) \neq f_p(a+(N+j)d).$$

The failure of this combinatorial attempt marked a turning point: it led us to consider the function $f_p$ as a possible counterexample to Conjecture \ref{con:altptc}.
Transitioning to a dynamical point of view, we analyzed the \emph{orbit closure} of $f_p$, which became the key ingredient in our construction of aperiodic Sudoku puzzles--later translated (using our tiling language) into a counterexample to the periodic tiling conjecture \ref{con:ptc}.

\subsection{From aperiodic subshift to aperiodic Sudoku to aperiodic monotiling.}

Let $p$ be an odd prime. The function $f_p$ in \eqref{eq:fp} is an example of a \emph{limit-periodic function} \cite{godreche1989sphinx} (in particular, it is an \emph{almost periodic} function in the sense of Besicovitch \cite{besicovitch1926gapf}): 
Other than in the residue class $0 \Mod {p}$, $f_p$ behaves as a clock: it is an affine map from $\Z\setminus p\Z$ to $\Sigma_p$ that increases by one every time the input advances by one; and inside this residue class ($0 \Mod {p}$) but outside its subresidue class $0\Mod {p^2}$, $f_p$ is again an affine map from $p\Z\setminus p^2\Z$ to $\Sigma_p$ that increases by $1$ every time the input advances by $1$; and so on (see \cite[Figure 4.1]{GTan}). Thus, the subshift $f_p$ generates is a \emph{Toeplitz subshift}. 

Let $\cS_p\subset (\Sigma_p)^\Z$ denote the class of functions $g: \Z \to \Sigma_p$ that are either constant, or have a similar $p$-adic affine structure. Namely, $g\in\cS_p$ whenever there is a step $h\in \Sigma_p$ such that outside of one coset $\Gamma_1$ of $p\Z$, $g$ is affine, with $g$ increasing by $h$ when the input advances by $1$; in $\Gamma_1$ but outside of one coset $\Gamma_2\subset \Gamma_1$ of $p^2\Z$, $g$ is again affine, and advancing by $h$ when the input advances by $p$; and so on. This set is the \emph{orbit closure} of $f_p$, in the sense that the functions $n\mapsto f_p(an+b)$ for $(a,b)\in \Z^2$ form a dense subset of $\cS_p$ (which is translation and dilation invariant). Thus, it contains the subshift generated by $f_p$ (see \cite[Proposition 13]{jv}), which is aperiodic in the sense that it is not empty and none of its points is periodic.

More generally, the only periodic elements of $\cS_p$ are the constant ones. Hence, to disprove Conjecture \ref{con:altptc}--and consequently Conjecture \ref{con:ptc}--it suffices to exhibit a system of constraints of the type described in Conjecture \ref{con:altptc} that determines a nonempty class of non-constant solutions in $\cS_p$ (for instance, the points in the subshift generated by $f_p$). We constructed precisely such a system, which we view as a kind of \emph{$p$-adic Sudoku puzzle}: the rules are local, yet they enforce a rich global $p$-adic affine structure, which is non-periodic.

Fix $p$ to be a sufficiently large prime and let $M\coloneqq p^2$.  We define $V$ as the set 
$$ V \coloneqq  \{ (1,-n): 1 \leq n \leq M\}$$
of primitive, nonzero, mutually incommensurable vectors in $\Z^2$, and consider the set of tuples $(C_v)_{v \in V}$ of functions $C_v: \Z \to \Sigma_p$ obeying the following conditions:
\begin{itemize}
    \item[(i)] For every $n=1,\dots,M$, the function $C_{(1,-n)}$ is not constant. 
    \item[(ii)] For every $(a,b)\in \Z^2$ the map 
$$n\mapsto C_{(1,-n)}((a,b)\cdot (n,1))=C_{(1,-n)}(an+b)$$
 from $\{1,\dots,M\}$ to $\Sigma_p$ agrees with a $\{1,\dots,M\}$-cutoff of an element of $\cS_p$.
\end{itemize}

Note that this set is of the form $\Solution(V,p,\Sigma_p,\Omega_p)$ for a finite set $\Omega_p\subset (\Sigma_p)^M\times (\Z/p\Z)^2$ defined based on conditions (i) and (ii).
It can be thought of as the following \emph{$p$-adic Sudoku puzzle}: We fix the \emph{Sudoku board} $\mathbb{B}=\{1,\dots,M\}\times \Z$. A function $S\colon \mathbb{B}\to \Sigma_p$ is a \emph{Sudoku solution} if none of its \emph{columns},
\begin{equation}\label{eq:column}
    m \mapsto S(n,m), \quad (n=1,\dots,M)
\end{equation}
is constant (e.g., a clock or periodized permutation on $\Z\setminus\Gamma_n$ for some coset $\Gamma_n$ of $p\Z$), and for every $a,b\in \Z$, the restriction of $S$ to a \emph{line} of slope $a$ and intercept $b$ on the Sudoku board:
$$S_{(a,b)}(n)\coloneqq S(n,na+b), \quad n=1,\dots,M$$
agrees with a $\{1,\dots,M\}$-cutoff of an element of $\cS_p$. So, the $n^{\text{th}}$ column \eqref{eq:column} of a Sudoku solution $S$ corresponds to the function $C_{(1,-n)}$ in our previous notation. 

This Sudoku puzzle admits solutions; for instance, the function $S\colon \mathbb{B}\to \Sigma_p$ defined as $S(n,m)\coloneqq f_p(m)$ for every $n=1,\dots,M$ solves this Sudoku puzzle (we refer to this Sudoku solution as a \emph{standard} solution). See Figure \ref{fig:p5}.

\begin{figure}[ht]
    \centering
    \includegraphics[width=11cm]{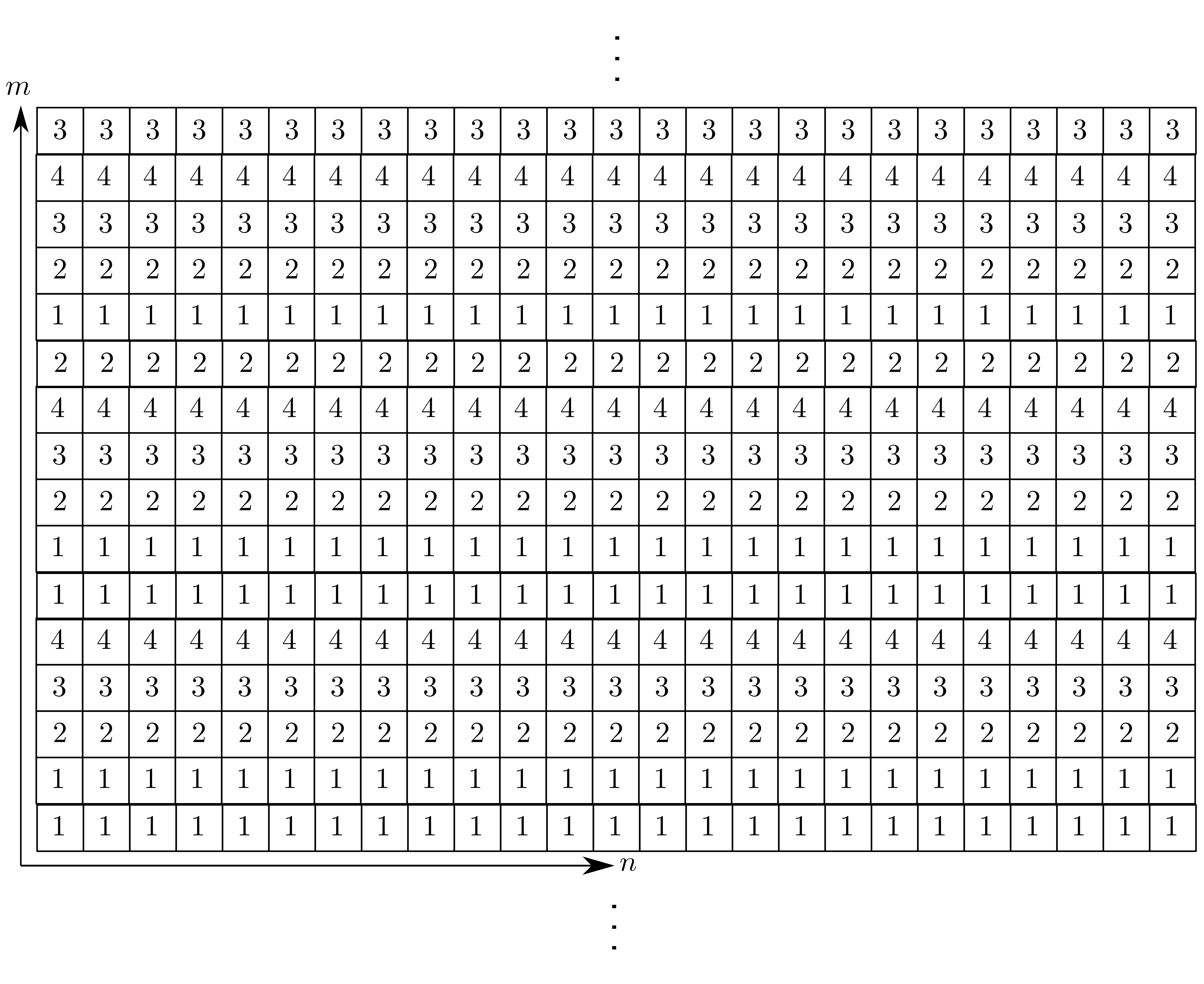}
    \caption{Standard $5$-adic Sudoku solution.}
    \label{fig:p5}
\end{figure} 

We then show that the $p$-adic nature of the Sudoku constraints impose a hierarchical structure, which forces the columns--lines of infinite slope of \emph{any} Sudoku solution $S$ to be non-constant elements of $\cS_p$ (in fact, each of these columns is a point in the Toeplitz subshift generated by $f_p$). We see that the $p$-adic Sudoku puzzle has solutions, but none of them can be periodic. This means that this puzzle is \emph{aperiodic}, providing a counterexample to the periodic coloring conjecture \ref{con:altptc}.

 Using our tiling language, we translate this puzzle into a monotiling equation $\Tile(F;\Z^2\times E)$ in a virtually-$\Z^2$ space\footnote{Here $E$ is a subset of a finite Abelian group $G_0$. In order to obtain a tiling of the entire group $\Z^2 \times G_0$, in our actual construction in \cite{GT22} we replace the large prime $p$ with a large power of $2$. This makes the analysis of Sudoku solutions technically more involved. For clarity of exposition, we therefore focus here on the odd prime case, which captures the essence of the construction.}. 
 Every solution $S\colon \mathbb{B}\to \Sigma$ to the $p$-adic Sudoku puzzle is converted into a tiling $A\in \Tile(F;\Z^2\times E)$ and vice versa. Informally, the $(1,-n)$-invariant component of the tiling corresponds to the $n^{\text{th}}$ column of the Sudoku solution $S$ and for every $(a,b)\in \Z^2$, the behavior of the tiling equation $F\oplus A$ in the slice $\{(a,b)\}\times E$ corresponds to the line of slope $a$ and intercept $b$:
 $$S_{(a,b)}=(S(1,a+b),S(2,2a+b),\dots,S(M,Ma+b)),$$ in the Sudoku solution $S$ (resembling the classical projective duality between points and lines in the plane  \cite[Remark 7.10]{GT22}).
 By the nature of this encoding, this monotiling equation is also aperiodic and therefore yields a counterexample to the periodic tiling conjecture \ref{con:ptc} (see \cite{GT22} for details).

\subsection{Undecidability of translational monotilings.}

A proof of the periodic tiling conjecture implies the decidability of tilings (see Section \ref{sec:domino}). The converse, however, is subtler: while the presence of an aperiodic tiling does not automatically lead to the undecidability of tilings, such an aperiodic construction is often the first step toward establishing an undecidability result. Motivated by this, once we had constructed the counterexample to Conjecture \ref{con:ptc}, we turned to the question of whether it could be used to prove the undecidability of translational monotilings.

In \cite{AanderaaLewis1974}, Aanderaa and Lewis employed a $p_1 \times p_2$-adic structure (with sufficiently large distinct primes $p_1,p_2$) to establish the undecidability of the \emph{empty distance subshift problem}. Exploiting the hierarchy of the $p_1\times p_2$-adic scales, they then gave an alternative proof of the undecidability of the Wang domino problem: each $p_1 \times p_2$-adic scale,  $(a,b)\in \N\times \N$, of the subshift was decorated to encode the Wang tile covering the corresponding point $(a,b)\in \N^2$. (See \cite[Section 4]{jv} for an exposition on this construction.)

Note that the structure of the solutions to our $p$-adic Sudoku puzzle \cite{GT22} is similar in nature to this $p_1\times p_2$-adic structure\footnote{Emmanuel Jeandel brought the construction of Aanderaa and Lewis to our attention and pointed out the similar structure.}.
Thus, inspired by \cite{AanderaaLewis1974,Lewis1979}, we introduce in \cite{GT23} the notion of \emph{decorated $p_1\times p_2$-adic Sudoku puzzle}. We then show that any instance of the domino problem can be encoded as such a decorated Sudoku  \cite[Theorem 3.2]{GT23}. See Figure \ref{fig:decorated}.

\begin{figure}[ht]
    \centering
    \includegraphics[width=10cm]{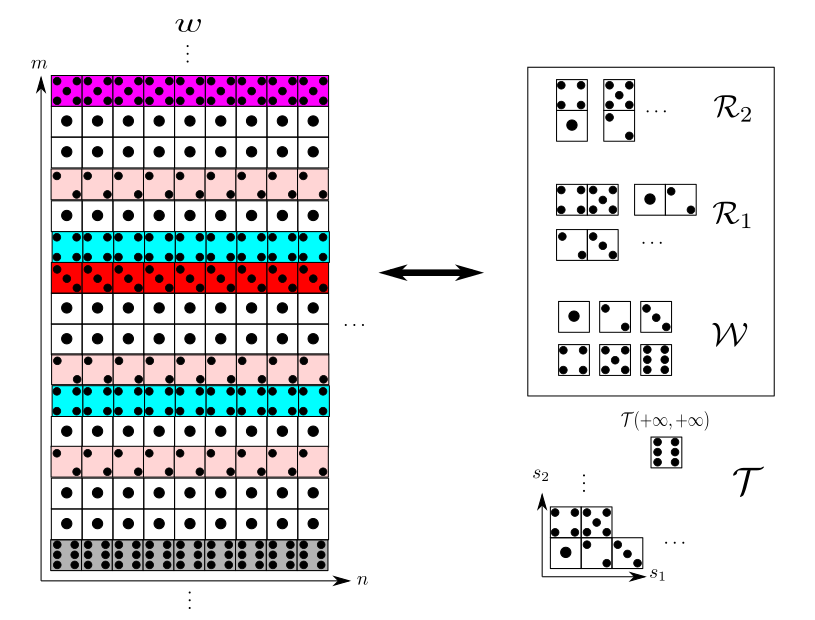}
    \caption{On the right, a given domino problem and a piece of a solution on $\{(0,0),(1,0),(2,0),(0,1),(1,1)\}$. On the left is the corresponding standard decorated $3\times 5$-adic Sudoku solution; the white cells represent the $(0,0)$ scale of the Sudoku solution, the pink cells represent the $(1,0)$ scale, red represents the scale $(2,0)$, blue represents the scale $(0,1)$ and purple represents the $(1,1)$ scale.}
    \label{fig:decorated}
\end{figure} 
By the undecidability of the domino problem, it follows that decorated $p_1 \times p_2$-adic Sudoku puzzles are also undecidable; that is, there is no algorithm that, given a decorated $p_1 \times p_2$-adic Sudoku puzzle, determines whether it admits a solution. 

Using our tiling language, we further encode any decorated $p_1 \times p_2$-adic Sudoku puzzle as a monotiling equation in $\Z^2 \times G_0$, for a suitable finite Abelian group $G_0$ depending on the given Sudoku puzzle. Consequently, since decorated $p_1 \times p_2$-adic Sudoku puzzles are undecidable, we obtain the undecidability of translational monotilings in virtually-$\Z^2$ spaces \cite[Theorem 1.3]{GT23}.  See Figure \ref{fig:logic}. 

\begin{figure}[ht]
    \centering
    \includegraphics[width=5.5cm]{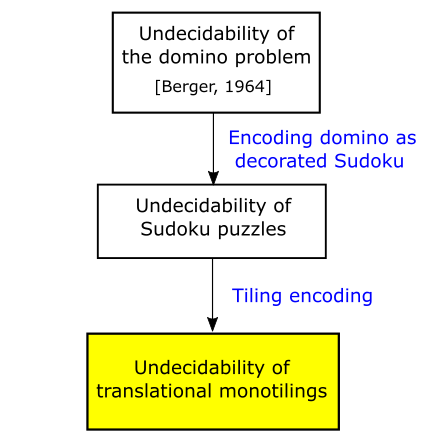}
    \caption{A high-level overview of the proof of the undecidability of translational monotilings.}
    \label{fig:logic}
\end{figure} 

\begin{remark}
    Note that in our undecidability result \cite[Theorem 1.3]{GT23} the group $\Z^2\times G_0$ is not fixed; the finite Abelian group $G_0$ is provided as part of the algorithm input. Indeed, the group $G_0$ is determined by the decoration rules of the Sudoku, which is determined by the original domino problem. We reduce the number of tiles in the domino problem to one at the cost of allowing the group $G_0$ to vary and to have an arbitrarily large rank.  
Using \cite[Corollary 1.3]{bgu}, we can pull back our undecidability result for translational monotilings in virtually-$\Z^2$ spaces to deduce the undecidability of translational monotilings in $\Z^d$, where $d$ is not fixed, but is provided as part of the input and can be arbitrarily large. Thus, the following question remains open.

\begin{question}
    Is there $d\geq 3$ such that translational monotilings are undecidable in $\Z^d$?
\end{question}
\end{remark}

\section{Continuous translational monotilings}

Let $\Omega\subset \R^d$ be a bounded measurable set of positive measures. We say that $\Omega$ \emph{tiles $\R^d$ by translations} if there exists a \emph{tiling} $T\subset \R^d$ such that almost every point in $\R^d$ is covered by \emph{exactly} one of the sets $\Omega +t$, $t\in T$; i.e., these sets \emph{partition} $\R^d$ up to null sets, or, equivalently  $$\Omega \oplus T=_\ae \R^d.$$
As before, we let $\Tile(\Omega;\R^d)$ denote the space of all the solutions to the indeterminate tiling equation $\Omega\oplus X=_\ae \R^d$, with unknown $X\subset \R^d$. We study the structural behavior of this apace. 

\begin{example}[Hexagonal tiling]\label{ex:bee}
    Let $\Omega\subset \R^2$ be a centrally symmetric hexagon.  Then $\Tile(\Omega;\R^2)$ consists of all the cosets of the lattice (i.e., a discrete co-compact subgroup of $\R^2$) whose fundamental domain is $\Omega$. 
    This regular tiling appears in nature, e.g., in bee hives. See Figure \ref{fig:hex}.
\end{example}

\begin{figure}[ht]
    \centering
    \includegraphics[width=3.5cm]{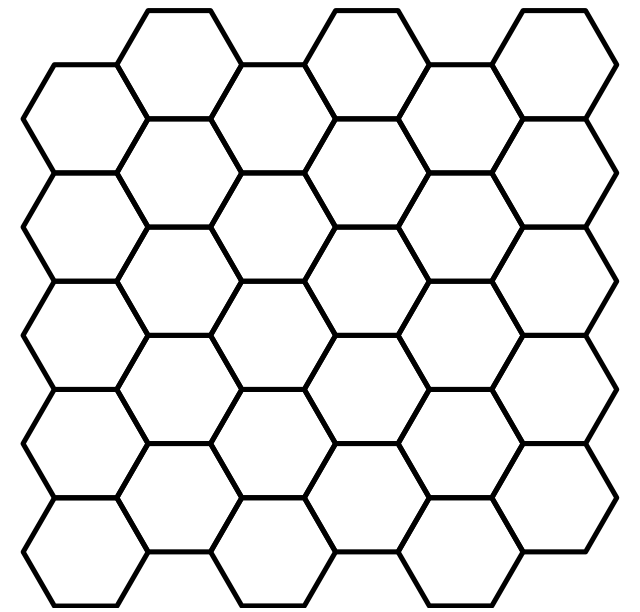}
    \hspace{4em}
    \includegraphics[width=3.1cm]{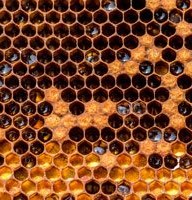}
    \caption{On the left--tiling by a symmetric hexagon, on the right--bee hive.}
    \label{fig:hex}
\end{figure}

\begin{example}[Cube tiling]\label{ex:cube}
    Let $\Omega= [0,1]^d$ be a $d$-dimensional unit cube. Then clearly $\Z^d\in \Tile(\Omega;\R^d)$ is a regular lattice tiling by $\Omega$. However, the space $\Tile(\Omega;\R^d)$ becomes extremely complicated in high dimensions.  In particular, Keller's conjecture, which asserts that any cube tiling contains two cube translates sharing an entire facet \cite{Keller}, turned out to be false in dimensions $d\geq 8$ \cite{LS,Mackey}. This is an instance of the phenomenon demonstrated in this article: the higher the dimension, the worse the structure of tilings.
\end{example}

Continuous tilings share many properties with discrete tilings, such as translation and reflection invariance, and the density property (see Section \ref{sec:discrete} above). 
However, a substantial difference is that there is no continuous analog of the dilation lemma \ref{lem:dilation}. Since this lemma plays a crucial role in our study of the structure of discrete tilings, we cannot adapt the discrete arguments to the continuous setup. As a result, new methods are needed to study the structure of continuous translational monotilings.

\subsection{Continuous periodic tiling conjecture.} 
We consider the following continuous analog of the discrete periodic tiling conjecture \ref{con:ptc}.

\begin{conjecture}[Continuous periodic tiling conjecture \cite{GS,LW}]\label{con:conptc}
    Let $\Omega\subset \R^d$ be a bounded measurable set of positive measures. Then the tiling equation $\Omega\oplus X=_\ae \R^d$ is not aperiodic. In other words, if the space $\Tile(\Omega;\R^d)$ is nonempty then it must contain a \emph{periodic} set $T$ (i.e., $T$ is invariant under translations by some lattice $\Lambda\subset \R^d$).  
\end{conjecture}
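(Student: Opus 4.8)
Since Conjecture~\ref{con:conptc} is stated for all dimensions $d$, the first thing to decide is whether it is even true in that generality, and here the discrete picture is a strong warning: by \cite{GT22} together with the lifting of \cite{bgu}, the discrete conjecture already fails in $\Z^d$ for large $d$, so a natural first task is to transfer this failure to $\R^d$. The plan would be to ``inflate'' a discrete counterexample $F\subset\Z^d$ to the tile $\Omega_F:=F+[0,1)^d\subset\R^d$ and show that $\Tile(\Omega_F;\R^d)$ inherits the aperiodicity of $\Tile(F;\Z^d)$; the content of this step is a \emph{rationalization} statement --- that any $T\in\Tile(\Omega_F;\R^d)$ can, after a measure-preserving adjustment compatible with translations, be taken to be a subset of $\Z^d$ arising from a $\Z^d$-tiling by $F$ --- so that no ``irrational'' continuous tiling smuggles in periodicity. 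Thus the honest expectation is that Conjecture~\ref{con:conptc} is \emph{false} in high dimensions, and its interesting content is the low-dimensional cases $d\le 2$.

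For $d=1$: a bounded measurable tile $\Omega\subset[-R,R]$ that tiles $\R$ along $T$ has uniformly bounded gaps (if two consecutive points of $T$ were more than $2R$ apart, a full subinterval would be left uncovered), so one can reduce --- via the Lagarias--Wang analysis of tilings of the line, or a direct pigeonhole argument on the gap pattern of $T$ --- to the conclusion that $T$ is a finite union of arithmetic progressions, hence periodic. For $d=2$ the target is the analog of Section~\ref{sec:low}: a structure theorem decomposing $T$ into singly-periodic pieces, followed by a weak-to-strong periodicity step in the spirit of Lemma~\ref{lem:weaktoptc}. The obstruction, flagged in the text, is that there is no continuous dilation lemma, so Theorem~\ref{thm:structure} cannot be reproduced directly. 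Two substitutes are natural. The first is discretization: prove that every $T\in\Tile(\Omega;\R^2)$ is \emph{rational}, i.e.\ a finite union of cosets of some lattice $\tfrac1N\Z^2$, after which the problem collapses to the discrete $\Z^2$ theorem of Bhattacharya~\cite{B} and \cite{GT}. A plausible route to rationality is slicing: the one-dimensional sections of $T$ are tilings of $\R$ by finite unions of intervals, hence periodic with rational structure by the $d=1$ case, and one then propagates and reconciles these slice-periods across all directions, as in \cite[Section~5]{GT}. The second is Fourier-analytic: from $\widehat{\one_\Omega}\cdot\widehat{\one_T}=\delta_0$ one gets $\supp\widehat{\one_T}\subseteq\{0\}\cup\mathcal{Z}(\widehat{\one_\Omega})$, and since $\widehat{\one_\Omega}$ is entire of exponential type its zero set is sufficiently rigid that, combined with the bounded-density structure of $T$, it might force a genuine period.

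The main obstacle, in both directions, is the same: the absence of the dilation lemma, which is precisely the device that in the discrete setting manufactures the long-range correlations behind Theorem~\ref{thm:structure}. In the $d=2$ positive direction this shows up as the need to prove rationality of continuous tilings --- the assertion that a tile cannot tile $\R^2$ more flexibly than it tiles $\tfrac1N\Z^2$ --- which is of independent interest and, to date, open in general. In the high-$d$ negative direction it reappears as the need to certify that no irrational continuous tiling of $\Omega_F$ escapes the discrete model. I would therefore concentrate the effort on the $d=2$ rationality statement: a proof of it would settle $d\le 2$ at once and would clarify exactly how, and where, the continuous problem decouples from the discrete one; the natural toolkit is the slice machinery of \cite[Section~5]{GT} together with the $d=1$ theory and a compactness argument on $\Tile(\Omega;\R^2)$ regarded as a topological dynamical system.
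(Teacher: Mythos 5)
This statement is a \emph{conjecture}, not a theorem, so there is no proof in the paper to compare against; what the paper offers instead is a status report, and your assessment matches it closely. You correctly identify that the conjecture is known to be \emph{false} in sufficiently high dimensions (by rigidly inflating the discrete counterexample $F\subset\Z^d$ of \cite{GT22} to a pixellated tile $F+[0,1)^d$, for which every tiling of $\R^d$ is forced to be a union of $\Z^d$-cosets, so aperiodicity transfers); that it holds in $\R$ via the Lagarias--Wang rationality theorem reducing to the discrete $\Z$ case; that $\R^2$ remains open; and that the fundamental obstruction to transporting the $\Z^2$ proof is the absence of a continuous dilation lemma, making discretization/rationality the natural target. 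One small caveat: your parenthetical suggestion of ``a direct pigeonhole argument on the gap pattern of $T$'' for $d=1$ glosses over the real content --- bounded gaps alone do not make a continuous tiling a finite union of arithmetic progressions; the nontrivial step is precisely the Lagarias--Wang rationality/structure theorem, after which the pigeonhole argument of Newman applies to the resulting discrete tiling. You also omit the partial positive results the paper records (convex tiles in all dimensions, topological disks and rational polygonal tiles in $\R^2$), but for a status assessment that is a minor omission rather than an error.
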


In the absence of a continuous dilation lemma and the presence of irrational points, it is significantly more difficult to establish positive results toward Conjecture \ref{con:conptc} than positive results toward Conjecture \ref{con:ptc}. 
The following partial supporting results are known:

\begin{itemize}
\item  Conjecture \ref{con:conptc} holds for convex tiles in all dimensions \cite{V,M}. In fact, it was shown that any convex set that tiles by translations admits a lattice tiling.
\item  In \cite{LW}, by proving the rationality of translational monotilings in $\R$, it was shown that the continuous periodic tiling conjecture in $\R$ can be reduced to the discrete periodic tiling conjecture in $\Z$. In turn, since Conjecture \ref{con:ptc} is known to hold in $\Z$, this proves Conjecture \ref{con:conptc} in $\R$. 
\item In $\R^2$, Conjecture \ref{con:conptc} is known to hold for topological disks \cite{bn,gbn,ken,err}. In was further shown that in this case all the tilings must be singly-periodic.
\item Recently, in \cite{dgm}, we also established Conjecture \ref{con:conptc} for rational polygonal sets, by modeling these tiles as discrete tiles and analyzing the internal structure of the earthquake components of their tilings.
\end{itemize}

\subsection{Continuous vs. discrete tilings.}
It is known, due to \cite{LW}, that Conjecture \ref{con:ptc} in $\Z$ implies Conjecture \ref{con:conptc} in $\R$. In dimensions $d\geq 2$, this reduction is not known. Thus, in particular, the continuous periodic tiling conjecture in $\R^2$ is still open, although the discrete periodic tiling conjecture in $\Z^2$ is known to hold. 
\begin{question}
    Does Conjecture \ref{con:conptc} hold in $\R^2$?
\end{question}

Discretizing continuous tiles turns out to be an extremely challenging task when the degree of irrational properties of the tiles increases.
Thus, for instance, discretizing polygonal sets with rational vertices is relatively simple, while discretizing polygonal sets assuming only rational slopes is significantly more subtle and discretizing polygonal sets with irrational slopes requires new tools, since in this case the slopes are not preserved under linear rational approximation and thus the combinatorial structure of the tiling need not be preserved.

On the other hand, in \cite[Section 2]{GT22}, by rigidly inflating a discrete tile into a continuous one, we showed that the converse reduction holds in all dimensions; that is, the continuous periodic tiling conjecture \ref{con:conptc} in $\R^d$ implies the discrete periodic tiling conjecture \ref{con:ptc} in $\Z^d$ for every $d\geq 1$. 
Therefore, for a sufficiently large $d$, our counterexample in $\Z^d$ yields a construction of a counterexample in $\R^d$ \cite{GT22}. Moreover, in \cite{GK}, by constructing a \emph{folded bridge} to connectify the latter counterexample in $\R^d$ and obtain a \emph{connected} counterexample in $\R^{d+2}$. This disproves a stronger version of Conjecture \ref{con:conptc} in which the tiles are required to be open and connected, in all sufficiently high dimensions. 
Indeed--the higher the dimension, the wilder the structure of the tilings.

\section*{Acknowledgments.}
The author is supported by NSF CAREER DMS-2441769 and the Alfred P. Sloan Fellowship. I am grateful to Bryna Kra and Terence Tao for their comments on an earlier version of this paper.

\bibliographystyle{plain}
\bibliography{references}
\end{document}